\numberwithin{equation}{section}
\newcommand{\Mod}{\mathrm{Mod}}
\newcommand{\Hom}{\mathrm{Hom}}
\newcommand{\ihom}{\mathcal{I}hom}
\newcommand{\M}{\mathcal{M}}
\newcommand{\N}{\mathcal{N}}
\newcommand{\Ker}{\operatorname{Ker}}
\newcommand{\Coker}{\operatorname{Coker}}
\newcommand{\Image}{\operatorname{Im}}
\newcommand{\id}{{\rm id}}
\newcommand{\simto}{\overset{\sim}{\longrightarrow}}
\newcommand{\op}{{\mbox{\scriptsize op}}}
\newcommand{\SA}{\mathcal{A}}
\newcommand{\SM}{\mathcal{M}}
\newcommand{\SF}{\mathcal{F}}
\newcommand{\Modcoh}{\mathrm{Mod}_{\mbox{\rm \scriptsize coh}}}
\renewcommand{\hom}{\mathcal{H}om}
\newcommand{\I}{{\rm I}}
\newcommand{\Icoh}{{\rm I}_{\scriptsize \rm coh}}
\newtheorem*{maintheorem}{Main Theorem}
\newtheorem{theorem}{Theorem}[section]
\newtheorem{corollary}[theorem]{Corollary}
\newtheorem{lemma}[theorem]{Lemma}
\newtheorem{sublemma}[theorem]{Sublemma}
\newtheorem{proposition}[theorem]{Proposition}
\theoremstyle{definition}
\newtheorem{definition}[theorem]{Definition}
\theoremstyle{remark}
\title{Finiteness properties of Ind-Sheaves with Ring Actions\footnote{{\bf 2020 Mathematics 
Subject Classification: }18F10, 14F06}}
\author{Yohei ITO\footnote{Department of Mathematics, Faculty of Science Division II, Tokyo University of Science, 1-3, Kagurazaka, Shinjuku-ku, Tokyo, 162-8601, Japan. E-mail: yitoh@rs.tus.ac.jp }}
\date{}
\begin{document}
\maketitle

\begin{abstract}
In this paper, we shall consider some finiteness of ind-sheaves with ring actions.
As the main result of this paper, there exists an equivalence of categories between
the abelian category of coherent ind-$\beta_X\SA$-modules and the one of coherent $\SA$-modules,
where $\SA$ is a sheaf of $\Bbbk_X$-algebras and $\Bbbk$ is a field.
\end{abstract}

\section{Introduction}
In \cite{KS01}, M.\:Kashiwara and P.\:Schapira introduced the notion of ind-sheaves
to treat ``sheaves" of functions with tempered growth conditions.
Ind-sheaves are defined as ind-objects of the category of sheaves of vector spaces with compact support as below.

Throughout this paper, 
$\Bbbk$ is a field and all topological spaces are locally compact, Hausdorff and
with a countable basis of open sets.
We denote by $\Mod(\Bbbk_X)$ the category of sheaves 
of $\Bbbk$-vector spaces on a topological space $X$,
and denote by $\Mod^c(\Bbbk_X)$ the full subcategory of $\Mod(\Bbbk_X)$ 
consisting of sheaves with compact support.

\begin{definition}[{\cite[Def.\:4.1.2]{KS01}}]
An ind-sheaf of $\Bbbk$-vector spaces on $X$ is an ind-object of $\Mod^c(\Bbbk_X)$,
that is, inductive limit 
$$\displaystyle``\varinjlim_{i\in I}"\SF_i := \varinjlim_{i\in I}\Hom_{\Mod^c(\Bbbk_X)}(\ \cdot\ ,\ \SF_i)$$
 of a small filtrant inductive system $\{\SF_i\}_{i\in I}$ in $\Mod^c(\Bbbk_X)$.
 \end{definition}
 
  We call an ind-sheaf of $\Bbbk$-vector spaces an ind-sheaf, for simplicity.
 Let us denote by $\I\Bbbk_X$ the category of ind-sheaves of $\Bbbk$-vector spaces on $X$.
 Note that it is abelian (see \cite[Thm.\:8.6.5 (1)]{KS06} for the details).
 Note also that there exists a natural exact embedding
$\iota_X\colon \Mod(\Bbbk_X)\to\I\Bbbk_X$
of abelian categories.
It has an exact left adjoint $\alpha_X\colon \I\Bbbk_X\to\Mod(\Bbbk_X)$, 
that has in turn an exact fully faithful
left adjoint functor $\beta_X\colon \Mod(\Bbbk_X)\to\I\Bbbk_X$
(see \cite[\S 3.3]{KS01} for the details): 
 \[\xymatrix@C=60pt{\Mod(\Bbbk_X)  \ar@<1.0ex>[r]^-{\iota_{X}} 
 \ar@<-1.0ex>[r]_- {\beta_{X}} & \I\Bbbk_X 
\ar@<0.0ex>[l]|-{\alpha_{X}}}.\]
For a continuous map $f\colon X\to Y$,
we have the tensor product functor
$\otimes\colon \I\Bbbk_X\times \I\Bbbk_X\to \I\Bbbk_X$,
the internal hom functor 
$\ihom\colon(\I\Bbbk_X)^\op\times \I\Bbbk_X\to \I\Bbbk_X$,
the external hom functor 
$\hom := \alpha_X\ihom\colon(\I\Bbbk_X)^\op\times \I\Bbbk_X\to \Mod(\Bbbk_X)$,
the inverse image functor $f^{-1}\colon \I\Bbbk_Y\to \I\Bbbk_X$
and the direct image functor $f_{\ast}\colon \I\Bbbk_X\to \I\Bbbk_Y$.
For $Z\subset X$ and the inclusion map $i_Z\colon Z\to X$,
we denote $i_Z^{-1}(\cdot)$ by $(\cdot)|_Z$, for simplicity.
Note that these functors have same properties of the case of classical sheaves.
See \cite[\S\S\:4.2, 4.3]{KS01} for the details.
%We just remark that for $F, G\in \I\Bbbk_X$ a sheaf $\hom(F, G)$ is a sheaf which is defined by
%$$\Gamma(U; \hom(F, G)) := \Hom_{\I\Bbbk_U}(F|_U, G|_U)$$
%for an open subset $U$ of $X$
%by \cite[Props.\:3.3.11 (i), 5.4.11]{KS01}.

The authors also defined the notion of stalk for ind-sheaves which is called ind-stalk,
and prove the following results.

\begin{definition}[{\cite[\S\:4.3]{KS01}}]
Let $X$ be a topological space.
For a locally closed subset $Z\subset X$ and an ind-sheaf $F\in \I\Bbbk_X$,
we set
$$_ZF := F\otimes \beta_X\Bbbk_Z.$$
Moreover, we define the ind-stalk of an ind-sheaf $F\in \I\Bbbk_X$ at $x\in X$ as $_{\{x\}}F$.
For simplicity, we denote $_{\{x\}}F$ by $_xF$.
\end{definition}

Remark that there exists an isomorphism $_ZF \simeq i_{Z!!}i_Z^{-1}F$,
here $i_Z\colon Z\to X$ is the inclusion map and
$i_{Z!!}\colon \I\Bbbk_Z\to \I\Bbbk_X$ is the left adjoint functor of $i_Z^{-1}\colon \I\Bbbk_X\to \I\Bbbk_Z$.
See \cite[Notation\:2.3.5, Def.\:3.3.15, Prop.\:4.2.14 (i)]{KS01} for the details.

\begin{proposition}[{\cite[Prop.\:4.3.21]{KS01}}]\label{prop_indstalk}
Let $\varphi\colon F\to G$ be a morphism of ind-sheaves.
Assume that for any $x\in X$, the induced morphism $_x\varphi\colon _xF\to _xG$ is the zero morphism.
Then $\varphi$ is the zero morphism.
\end{proposition}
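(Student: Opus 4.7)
The plan is to first reduce the statement to a vanishing criterion for a single ind-sheaf, then verify that criterion via the adjunction $(\beta_X,\alpha_X)$ together with a small-limit presentation of the ind-sheaf.

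\emph{Reduction.} Form the image $H := \operatorname{Im}(\varphi)$ in the abelian category $\I\Bbbk_X$ and factor $\varphi = \iota\circ\pi$ with $\pi\colon F\twoheadrightarrow H$ epi and $\iota\colon H\hookrightarrow G$ mono. The functor $(\cdot)\otimes\beta_X\Bbbk_{\{x\}}$ is left adjoint to $\ihom(\beta_X\Bbbk_{\{x\}},\cdot)$, so it commutes with colimits; in particular $_x\pi$ is an epimorphism. From $_x\varphi = {_x\iota}\circ{_x\pi} = 0$ and $_x\pi$ epi, we obtain $_x\iota = 0$. Rewriting $_x\iota \simeq i_{\{x\}!!}(i_{\{x\}}^{-1}\iota)$ and using the full faithfulness of $i_{\{x\}!!}$ together with the exactness of $i_{\{x\}}^{-1}$, we see that $i_{\{x\}}^{-1}\iota$ is simultaneously a monomorphism and zero; hence $i_{\{x\}}^{-1}H = 0$, i.e.\ $_xH = 0$, for every $x \in X$. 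It therefore suffices to prove that an ind-sheaf $H$ with $_xH = 0$ for every $x \in X$ must itself vanish.

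\emph{Vanishing from ind-stalks.} The functor $\alpha_X$ is exact and compatible with the tensor product of ind-sheaves (cf.\ \cite[\S 4.2]{KS01}), and satisfies $\alpha_X\beta_X = \id$. Hence
\[
\alpha_X({_xH}) \;\simeq\; (\alpha_XH)\otimes \Bbbk_{\{x\}},
\]
the skyscraper sheaf at $x$ with stalk $(\alpha_XH)_x$, whose vanishing for all $x$ forces $\alpha_XH = 0$. Now write $H = ``\varinjlim_{i \in I}"H_i$ with $H_i \in \Mod^c(\Bbbk_X)$. By the universal mapping property,
\[
\Hom_{\I\Bbbk_X}(H_i, H) \;\simeq\; \varinjlim_{j \geq i}\Hom_{\Mod(\Bbbk_X)}(H_i, H_j),
\]
so $H = 0$ is equivalent to the assertion that for every $i$ there is some $j \geq i$ for which the structural morphism $H_i \to H_j$ is zero in $\Mod(\Bbbk_X)$. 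The vanishing $\alpha_X H = 0$ combined with the compact support of each $H_i$ (which allows a finite cover of $\operatorname{supp} H_i$ and hence a finite aggregation of local stabilisations coming from $_xH = 0$) delivers the required $j$.

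\emph{Main obstacle.} The crux is the last step: converting the pointwise information ``$_xH = 0$ for all $x$'' into a single cofinal arrow $H_i \to H_j$ killing $H_i$ in the inductive system. Without the compact-support hypothesis on each $H_i$, which is built into the very definition of $\I\Bbbk_X$, the aggregation of countably (or uncountably) many pointwise stabilisations into one would not be available; with it, a standard finite-cover argument closes the gap.
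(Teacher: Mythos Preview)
The paper does not give its own proof of this proposition; it is quoted from \cite[Prop.~4.3.21]{KS01} and used as a black box (the only argument nearby is the short deduction of Corollary~\ref{cor-indstalk} from it). So there is nothing in the paper to compare your approach against.

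On its own merits, your reduction step is clean: passing to $H=\operatorname{Im}\varphi$ and using right-exactness of $(-)\otimes\beta_X\Bbbk_{\{x\}}$ together with exactness of $i_{\{x\}}^{-1}$ to obtain $_xH=0$ for all $x$ is fine. The difficulty is entirely in your last paragraph, and that is where the write-up becomes a hand-wave. Two concrete problems. First, the detour through $\alpha_XH=0$ is misleading: that condition is strictly \emph{weaker} than ``$_xH=0$ for all $x$'' (take e.g.\ $H_n$ a fixed infinite direct sum with shift transition maps; then $\alpha_XH=0$ but $_xH\neq0$), and there exist nonzero ind-sheaves with $\alpha_XH=0$, so it cannot carry the weight you put on it. Second, and more importantly, unpacking $_xH=0$ gives, for each index $i$ and each point $x$, some $j(i,x)\ge i$ with the \emph{stalk} map $(H_i)_x\to(H_{j(i,x)})_x$ equal to zero. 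To run your compactness argument you need, for each $x$, an open $U_x$ and a single index $j$ with $H_i|_{U_x}\to H_j|_{U_x}$ zero; only then can you extract a finite subcover of $\operatorname{supp}H_i$ and pass to a common upper bound in the filtrant index set. But a sheaf morphism having zero stalk at $x$ does \emph{not} in general vanish on any neighbourhood of $x$, so the passage from ``stalkwise zero at each point, for a point-dependent index'' to ``locally zero for a uniform index'' is precisely the substance of the proposition, and your sketch does not supply it.
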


We use the following corollary in the last step of the proof of Main Theorem (Theorem \ref{main-thm}).
\begin{corollary}\label{cor-indstalk}
Let $\varphi\colon F\to G$ be a morphism of ind-sheaves.
Assume that for any $x\in X$, there exists an open neighborhood $U$ of $x$ such that
the induced morphism ${}_U\varphi\colon {}_UF\to {}_UG$ is the zero morphism.
Then $\varphi$ is the zero morphism.
\end{corollary}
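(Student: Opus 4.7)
The plan is to reduce to Proposition \ref{prop_indstalk}. It suffices to show that the hypothesis forces the ind-stalk ${}_x\varphi$ to vanish at every point $x \in X$; once this is established, the proposition immediately yields $\varphi = 0$.

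Fix $x \in X$ and, using the hypothesis, choose an open neighborhood $U$ of $x$ such that ${}_U\varphi = 0$. Since $x \in U$, we have $\{x\} \cap U = \{x\}$, and consequently a natural isomorphism $\Bbbk_{\{x\}} \simeq \Bbbk_U \otimes \Bbbk_{\{x\}}$ in $\Mod(\Bbbk_X)$. Because $\beta_X$ commutes with tensor products (a standard property recorded in \cite[\S\:3.3]{KS01}), applying it yields an isomorphism $\beta_X \Bbbk_{\{x\}} \simeq \beta_X \Bbbk_U \otimes \beta_X \Bbbk_{\{x\}}$ in $\I\Bbbk_X$.

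Unwinding the definition ${}_Z(\cdot) = (\cdot) \otimes \beta_X \Bbbk_Z$ and using associativity of the tensor product, we then obtain
\[
{}_x\varphi \;=\; \varphi \otimes \id_{\beta_X \Bbbk_{\{x\}}} \;\simeq\; \varphi \otimes \id_{\beta_X \Bbbk_U} \otimes \id_{\beta_X \Bbbk_{\{x\}}} \;=\; ({}_U\varphi) \otimes \id_{\beta_X \Bbbk_{\{x\}}}.
\]
Since ${}_U\varphi = 0$ by hypothesis, the right-hand side vanishes, so ${}_x\varphi = 0$. As $x \in X$ was arbitrary, Proposition \ref{prop_indstalk} gives $\varphi = 0$.

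The only step requiring genuine input beyond formal manipulation is the identification $\beta_X(\Bbbk_U \otimes \Bbbk_{\{x\}}) \simeq \beta_X \Bbbk_U \otimes \beta_X \Bbbk_{\{x\}}$, i.e.\ the compatibility of $\beta_X$ with tensor products; everything else is bookkeeping with the notation ${}_Z(\cdot) = (\cdot)\otimes \beta_X\Bbbk_Z$. I do not expect any serious obstacle, since the result is essentially a local-to-pointwise refinement of Proposition \ref{prop_indstalk} encoded through the tensor description of ${}_Z(\cdot)$.
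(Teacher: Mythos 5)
Your proof is correct and follows essentially the same route as the paper: both reduce to Proposition \ref{prop_indstalk} by identifying ${}_x\varphi$ with ${}_x({}_U\varphi)$ via the isomorphism $\beta_X\Bbbk_U\otimes\beta_X\Bbbk_{\{x\}}\simeq\beta_X(\Bbbk_U\otimes\Bbbk_{\{x\}})\simeq\beta_X\Bbbk_{\{x\}}$ coming from $\{x\}\subset U$ and the monoidality of $\beta_X$.
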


\begin{proof}
By the assumption, for any $x\in X$, there exists an open neighborhood $U$ of $x$ such that
the induced morphism ${}_U\varphi\colon {}_UF\to {}_UG$ is the zero morphism.
Then the induced morphism ${}_x({}_U\varphi)\colon {}_x({}_UF)\to {}_x({}_UG)$ is the zero morphism.
Since there exist isomorphisms
$${}_x({}_UF) = {}_UF\otimes \beta_X\Bbbk_{\{x\}}
= (F\otimes \beta_X\Bbbk_U)\otimes\beta_X\Bbbk_{\{x\}}
\simeq F\otimes \beta_X(\Bbbk_U\otimes \Bbbk_{\{x\}})
\simeq F\otimes \beta_X(\Bbbk_{\{x\}})
\simeq {}_xF$$
and also ${}_x({}_UG) \simeq {}_xG$,
the induced morphism ${}_x({}_U\varphi)$ 
is equal to the induced morphism ${}_x\varphi\colon {}_xF\to {}_xG$.
Hence $\varphi$ is the zero morphism by Proposition \ref{prop_indstalk}.
\end{proof}

Moreover, the authors defined the notion of ind-modules as below. 
\begin{definition}[{\cite[\S\S\:5.4, 5.5]{KS01}}]
Let $X$ be a topological space.
\begin{itemize}
\item[(1)]
An ind-$\Bbbk_X$-algebra is the date of an ind-sheaf $A$ and morphisms of ind-sheaves
$$\mu_A\colon A\otimes A\to A,\ \varepsilon_A\colon \iota_X\Bbbk_X\to A$$
such that the following diagrams are commutative:
\[\hspace{-27pt}
\xymatrix@M=5pt@R=20pt@C=40pt{
A\ar@{->}[r]^-\sim\ar@{->}[d]_-{\rm id_{A}} & \iota_X\Bbbk_{X}\otimes A\ar@{->}[d]^-{\varepsilon_{A}\otimes A}\\
A & A\otimes A\ar@{->}[l]^-{\mu_{A}},
}\hspace{11pt}
\xymatrix@M=5pt@R=20pt@C=40pt{
A\ar@{->}[r]^-\sim\ar@{->}[d]_-{\rm id_{A}} & A\otimes\iota_X\Bbbk_{X}\ar@{->}[d]^-{A\otimes\varepsilon_{A}}\\
A &  A\otimes A\ar@{->}[l]^-{\mu_{A}},
}\hspace{11pt}
\xymatrix@M=5pt@R=20pt@C=40pt{
A\otimes A\otimes A\ar@{->}[r]^-{\mu_A\otimes A}\ar@{->}[d]_-{A\otimes\mu_A} &
A\otimes A\ar@{->}[d]^-{\mu_{A}}\\
A\otimes A \ar@{->}[r]_-{\mu_{A}} & A.
}\]

\item[(2)]
Let $A$ be an ind-$\Bbbk_X$-algebra.
An ind-$A$-module (or $A$-module, for simplicity) is the date of an ind-sheaf $M$
and a morphism $\mu_M\colon A\otimes M\to M$
 such that the following diagrams are commutative:
 \[\hspace{-27pt}
\xymatrix@M=5pt@R=20pt@C=40pt{
M\ar@{->}[r]^-\sim\ar@{->}[d]_-{\rm id_{M}} & \iota_X\Bbbk_{X}\otimes M\ar@{->}[d]^-{\varepsilon_{A}\otimes M}\\
M & A\otimes M\ar@{->}[l]^-{\mu_{M}},
}\hspace{21pt}
\xymatrix@M=5pt@R=20pt@C=40pt{
A\otimes A\otimes M\ar@{->}[r]^-{\mu_A\otimes M}\ar@{->}[d]_-{A\otimes\mu_M} &
A\otimes M\ar@{->}[d]^-{\mu_{M}}\\
A\otimes M \ar@{->}[r]_-{\mu_{M}} & M.
}\]

\item[(3)]
A morphism of ind-$A$-modules from $M$ to $N$ is a morphism $\varphi\colon M\to N$ of ind-sheaves
such that the following diagram is commutative:
 \[\hspace{-27pt}
\xymatrix@M=5pt@R=20pt@C=40pt{
A\otimes M\ar@{->}[r]^-{A\otimes \varphi}\ar@{->}[d]_-{\mu_M}
& A\otimes N\ar@{->}[d]^-{\mu_N}\\
M\ar@{->}[r]_-{\varphi} & N.
}\]
\end{itemize}
\end{definition}

%\begin{remark}
%Let $A$ be an ind-$\Bbbk_X$-algebra.
%For an ind-$A$-module $M$,
%we denote by $\nu_M\colon M\to\ihom(A, M)$ the corresponding morphism of $\mu_M$
%by the isomorphism $\Hom_{\I\Bbbk_X}(A\otimes M, M)\simeq \Hom_{\I\Bbbk_X}(M, \ihom(A, M))$
%and set
%\begin{align*}
%e_M& \colon M\simeq \Bbbk_X\otimes M\xrightarrow{\ \varepsilon_A\otimes M} A\otimes M,\\
%e_M^\ast&\colon \ihom(A, M)\xrightarrow{\ \ihom(\varepsilon_A, M)\ }\ihom(\Bbbk_X, M)\simeq M.
%\end{align*}
%Note that in the classical case of module over ring,
%the analogous morphisms of $\mu_M, \nu_M, e_M$ and $e_M^\ast$ are the morphisms
%$a\otimes m\mapsto am, m\mapsto (a\mapsto am), m\mapsto 1\otimes m$
%and $\varphi\mapsto \varphi(1)$, respectively.
%Moreover, for an ind-$\Bbbk_M$-algebra $R$
%we defne an ind-$\Bbbk_M$-algebra $R^\op$
%by the date of an ind-$\Bbbk_M$-algebra $R$, a morphism
%$\mu_{R^\op}\colon R\otimes R \to R$ of ind-sheaves corresponding to $a\otimes b\mapsto b\otimes a$
%and a morphism $\varepsilon_{A^\op} := \varepsilon_{A}$.
%\end{remark}

Let us denote by $\I A$ the category of ind-$A$-modules.
Note that this is abelian, see \cite[\S\:5.4]{KS01} for the details.
Note also that
we have bifunctors:
$$\otimes_A\colon \I(A^\op)\times \I A\to \I\Bbbk_X,
\hspace{7pt}
%\mbox{and} 
%\hspace{7pt}
\ihom_A\colon(\I A)^\op\times \I A\to \I\Bbbk_X,
\hspace{7pt}
\hom_A := \alpha_X\ihom_A\colon(\I A)^\op\times \I A\to \Mod(\alpha_XA).$$
Moreover, for a continuous map $f\colon X\to Y$ and ind-$\Bbbk_Y$-algebra $B$,
we have the inverse image functor and  the direct image functor:
$$f^{-1}\colon \I B\to \I(f^{-1}B),
\hspace{17pt}
f_{\ast}\colon \I(f^{-1}B)\to \I B.$$
Note that these functors have same properties of the case of sheaves with ring actions.
See \cite[\S\S\:4.2, 4.3]{KS01} for the details. 
Here, we just recall the following properties.

\begin{proposition}[{\cite[Ex.\:5.4.7 for (1), (2), p92.\:for (3), Props.\:3.3.11 (i), 5.4.11]{KS01}}]
Let $X$ be a topological space, $A$ an ind-$\Bbbk_X$-algebra and $\SA$ a sheaf of $\Bbbk_X$-algebras on $X$.
\begin{itemize}
\item[\rm (1)]
Ind-sheaves $\iota_X\SA$ and $\beta_X\SA$ are ind-$\Bbbk_X$-algebras.

\item[\rm (2)]
The functors $\iota_X$ and $\beta_X$ induce fully faithful functors
$\iota_X\colon \Mod(\SA)\to\I(\iota_X\SA)$ and 
$\beta_X\colon \Mod(\SA)\to\I(\beta_X\SA)$,
and the functor $\alpha_X$ induces functor
$\alpha_X\colon \I(\iota_X\SA)\to \Mod(\SA)$ and 
$\alpha_X\colon \I(\beta_X\SA)\to \Mod(\SA)$.

\item[\rm (3)]
For any $M\in\I A$, we have an isomorphism
$A\otimes_AM \simeq M$ and $\ihom_A(A, M)\simeq M$.

\item[\rm (4)]
For any $\M, \N\in\Mod(\SA)$, 
we have an isomorphism
$\beta_X(\M\otimes_{\SA}\N)\simeq \beta_X\M\otimes_{\beta_X\SA}\beta_X\N$.

\item[\rm (5)]
For any $M, N\in\I A$, we have isomorphisms
$\alpha_X(M\otimes_{\beta_X\SA} N)\simeq \alpha_XM\otimes_\SA\alpha_XN$.

\item[\rm (6)]
Let $f\colon X\to Y$ be a continuous map and $B$ an ind-$\Bbbk_Y$-algebras.
Then for any $M, N\in \I B$, we have an isomorphism
$f^{-1}(M\otimes_BN)\simeq f^{-1}M\otimes_{f^{-1}B}f^{-1}N$.

\item[\rm (7)]
For $M, N\in \I A$ and an open subset $U$ of $X$,
then we have
$$\hom_A(M, N) = \Hom_{\I(A|_U)}(M|_U, N|_U).$$
\end{itemize}
\end{proposition}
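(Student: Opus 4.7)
The proposition collects seven compatibility statements relating $\iota_X$, $\alpha_X$, $\beta_X$ (and inverse images) with the tensor and Hom structures on $\I\Bbbk_X$. My common strategy is to reduce each part to a combination of (a) the corresponding classical statement for sheaves of $\SA$-modules and (b) the basic properties of $\iota_X$, $\alpha_X$, $\beta_X$ recalled from \cite{KS01}.

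For (1), the key input is that $\iota_X$ and $\beta_X$ are (lax) monoidal with respect to the free tensor products. For $\iota_X$ this follows immediately from $\iota_X(\SF\otimes_{\Bbbk_X}\SG)\simeq\iota_X\SF\otimes\iota_X\SG$: transporting the multiplication of $\SA$ via $\iota_X\SA\otimes\iota_X\SA\simeq\iota_X(\SA\otimes_{\Bbbk_X}\SA)\xrightarrow{\iota_X\mu_\SA}\iota_X\SA$ and taking $\varepsilon=\iota_X(\Bbbk_X\to\SA)$ yields the algebra structure, while the associativity and unit axioms follow by applying $\iota_X$ to the corresponding diagrams for $\SA$. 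For $\beta_X$ the same argument works once (4) is in place. Item (2) is then the same transport principle applied to the module action: $\iota_X\M$ (resp.\ $\beta_X\M$) becomes an $\iota_X\SA$- (resp.\ $\beta_X\SA$-)module via $\iota_X\mu_\M$ (resp.\ $\beta_X\mu_\M$); full faithfulness is inherited from that of $\iota_X$ and $\beta_X$ on ind-sheaves, since the module-compatibility of any morphism in the image is automatic. The functor $\alpha_X\colon\I(\iota_X\SA)\to\Mod(\SA)$ and its $\beta_X\SA$-analogue are constructed dually, using that $\alpha_X$ also commutes with tensor products.

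Part (3) is formal: $\otimes_A$ and $\ihom_A$ are defined as coequalizers and equalizers of the two natural maps $A\otimes A\otimes M\rightrightarrows A\otimes M$, and the module axioms on $M$ say precisely that these pairs split via $\mu_M$ and $\id_M$, whence $A\otimes_A M\simeq M$ and $\ihom_A(A,M)\simeq M$. Parts (4)--(6) all share the same shape: each of the exact functors $F\in\{\beta_X,\alpha_X,f^{-1}\}$ commutes with the free tensor product of (ind-)sheaves, and since $\otimes_\SA$, $\otimes_{\beta_X\SA}$, $\otimes_{f^{-1}B}$ are defined as coequalizers built from free tensors, the required isomorphism descends after applying $F$ by preservation of coequalizers. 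Finally, (7) should be read as the sectional identity $\sect(U;\hom_A(M,N))=\Hom_{\I(A|_U)}(M|_U,N|_U)$, and follows from $\hom_A=\alpha_X\ihom_A$, the definition of sections via $\alpha_X$, and the $i_U^{-1}\dashv i_{U*}$ adjunction restricted to $A$-modules.

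\textbf{Main obstacle.} The crucial nontrivial input underlying (1), (2) and (4) is the monoidal compatibility $\beta_X(\SM\otimes_{\Bbbk_X}\SN)\simeq\beta_X\SM\otimes\beta_X\SN$. This is not a formal consequence of $\beta_X$ being a left adjoint and has to be verified by hand from the explicit description of $\beta_X$ as the sheafification of the constant ind-object indexed by relatively compact open subsets (which is where \cite{KS01} does the real work). Once that compatibility is available, every remaining clause reduces to routine diagram chases with adjunctions, exactness, and preservation of coequalizers by the functors involved.
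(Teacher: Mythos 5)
This proposition is not proved in the paper at all: it is quoted from Kashiwara--Schapira's \emph{Ind-sheaves} with the precise references given in its header, so there is no in-paper argument to compare your attempt against. Your sketch is the standard one and is correct in outline --- transport of the algebra and module structures along the monoidal functors $\iota_X$, $\beta_X$, $\alpha_X$, $f^{-1}$ for (1), (2), the split-(co)equalizer argument for (3), and preservation of the defining coequalizers by these exact monoidal functors for (4)--(6) --- and you correctly isolate the one non-formal input, namely $\beta_X(\SM\otimes_{\Bbbk_X}\SN)\simeq\beta_X\SM\otimes\beta_X\SN$ together with the unit identification $\beta_X\Bbbk_X\simeq\iota_X\Bbbk_X$, which is exactly where \cite{KS01} does the work via the explicit description of $\beta_X$. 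Your reading of (7) as the sectional identity $\Gamma(U;\hom_A(M,N))=\Hom_{\I(A|_U)}(M|_U,N|_U)$ is the intended one (the left-hand side as printed omits the $\Gamma(U;-)$).
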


For simplicity, let ind-$\beta_X\SA$-module be called ind-$\SA$-module.
Note that ind-$\SA$-modules are not necessarily ind-objects of $\Mod^c(\SA)$.

The fully faithful functor $\beta_X\colon \Mod(\SA)\to \I(\beta_X\SA)$
is not necessarily an equivalence of categories.
In this paper, we define the notion of coherent ind-modules in Definition \ref{def-coh}
and prove the following results.
%that 
%the functor $\beta_X\colon \Mod(\SA)\to \I(\beta_X\SA)$
%induces an equivalence of abelian categories
%between the abelian category $\Modcoh(\SA)$ of coherent $\SA$-modules
%and the one $\Icoh(\beta_X\SA)$ of coherent ind-$\SA$-modules
%in Theorem \ref{main-thm}.
Although it may be known to experts,
it is not in the literature to our knowledge.

\begin{maintheorem}[Theorem \ref{main-thm}]
Let $\SA$ be a sheaf of $\Bbbk_X$-algebras.
There exists an equivalence of categories:
 \[\xymatrix@C=60pt{\Modcoh(\SA)  \ar@<1.0ex>[r]^-{\beta_{X}} \ar@{}[r]|-\sim
 & \Icoh(\beta_X\SA) 
\ar@<1.0ex>[l]^-{\alpha_{X}}}.\]
\end{maintheorem}

At the end of introduction,
let us recall the definition of coherent modules.
\begin{definition}
Let $X$ be a topological space and $\SA$ a sheaf of $\Bbbk_X$-algebras.%\footnote{One can also consider more general settings.}.
\begin{itemize}
\item[\rm (1)]
An $\SA$-module $\M$ is called locally finitely generated over $\SA$
if for any $x\in X$ there exist a positive integer $n$, an open neighborhood $U$ of $x$
and an exact sequence $(\SA|_U)^{\oplus n}\to \SM|_U\to 0$.

\item[\rm (2)]
An $\SA$-module $\M$ is called locally finitely presented over $\SA$
if for any $x\in X$ there exist two positive integers $m, n$, an open neighborhood $U$ of $x$
and an exact sequence $(\SA|_U)^{\oplus m}\to (\SA|_U)^{\oplus n}\to \SM|_U\to 0$.

\item[\rm (3)]
An $\SA$-module $\M$ is called pseudo-coherent over $\SA$
if for any open subset $U$ of $X$, any locally finitely generated $\SA|_U$-submodule $\N$ of $\M|_U$
is locally finitely presented over $\SA|_U$.

\item[\rm (4)]
An $\SA$-module $\M$ is called coherent over $\SA$
if it is locally finitely generated over $\SA$ and pseudo-coherent over $\SA$.
\end{itemize}
\end{definition}

\section*{Acknowledgement}
I would like to thank Taito Tauchi and Masatoshi Kitagawa
for discussions and giving comments.
%
%This work was supported by Grant-in-Aid for Research Activity Start-up (No.\:21K20335), 
%Japan Society for the Promotion of Science.

\section{Main Results}
The main theorem of this paper is Theorem \ref{main-thm}.
Recall that throughout of this paper,
$\Bbbk$ is a field and all topological spaces are locally compact, Hausdorff and
with a countable basis of open sets.
Remark that the category $\I\Bbbk_X$ of ind-sheaves admits finite coproducts
which are commute with functors $\iota_X, \alpha_X, \beta_X$.
Let us denote by $\oplus$ the finite coproduct in $\I\Bbbk_X$. 

\begin{definition}
Let $A$ be an ind-$\Bbbk_X$-algebra and $M$ an ind-$A$-module.
We say that $N$ is an ind-$A$-submodule of $M$
if it is a subobject of $M$ in the category $\I A$ of ind-$A$-modules.
\end{definition}

Similar to the case of sheaves with ring action,
we shall define some finiteness properties for ind-modules as below\footnote{Definition 2.2 is extracted from \cite[Exe. 8.23]{KS06}}.
\begin{definition}\label{def-coh}
Let $X$ be a topological space and $A$ an ind-$\Bbbk_X$-algebra.

\begin{itemize}
\item[\rm (1)]
We say that 
an ind-$A$-module $M$ is locally finitely generated over $A$
if for any $x\in X$ there exist a positive integer $n$, an open neighborhood $U$ of $x$
and an exact sequence $(A|_U)^{\oplus n}\to M|_U\to 0$ in $\I(A|_U),$
where $A|_U$ (resp.\,$M|_U$) is the inverse image of $A$ (resp.\,$M$)
by the natural embedding $i_U\colon U\hookrightarrow X$.

\item[\rm (2)]
We say that
an ind-$A$-module $M$ is locally finitely presented over $A$
if for any $x\in X$ there exist two positive integers $m, n$, an open neighborhood $U$ of $x$
and an exact sequence 
$(A|_U)^{\oplus m}\to (A|_U)^{\oplus n}\to M|_U\to 0$ in $\I(A|_U).$

\item[\rm (3)]
We say that
an ind-$A$-module $M$ is pseudo-coherent over $A$
if for any open subset $U$ of $X$, any locally finitely generated ind-$A|_U$-submodule $N$ of $M|_U$
is locally finitely presented over $A|_U$.

\item[\rm (4)]
We say that
an ind-$A$-module $M$ is coherent over $A$
if it is locally finitely generated and pseudo-coherent over $A$.
\end{itemize}
\end{definition}

Let $\SA$ be a sheaf of $\Bbbk_X$-algebras.
Note that for any $\SA$-module $\M$,
any epimorphism $\varphi\colon \M\twoheadrightarrow \N$ of $\SA$-modules
and any morphism $f\colon \SA\to \N$ of $\SA$-modules,
there locally exists a morphism $\widetilde{f}\colon \SA\to \M$ of $\SA$-modules
such that $\varphi\circ \widetilde{f} = f$,
namely, for any $x\in X$ there exist an open neighborhood $U$ of $x$
and a morphism $\widetilde{f|_U}\colon \SA|_U\to \M|_U$
such that $\varphi|_U\circ \widetilde{f|_U} = f|_U$.
Any ind-$\Bbbk_X$-algebra has same property.
\begin{lemma}\label{lem0}
Let $A$ be an ind-$\Bbbk_X$-algebra,
$M, N$ ind-$A$-modules,
and $\varphi\colon M\twoheadrightarrow N$ an epimorphism of ind-$A$-modules.
\begin{itemize}
\item[\rm(1)]
Let $f\colon A\to N$ be a morphism of ind-$A$-modules.
For any $x\in X$, there exist an open neighborhood $U$ of $x$ and
a morphism $\widetilde{f|_U}\colon A|_U\to M|_U$ of ind-$A|_U$-modules
such that $\varphi|_U\circ\widetilde{f|_U} = f|_U:$
\[\xymatrix@C=45pt@R=25pt{
M|_U\ar@{->>}[r]^-{\varphi|_U} & N|_U\\
A|_U. \ar@{.>}[u]^-{\widetilde{f|_U}}\ar@{->}[ru]_-{f|_U} & {}
}\]

\item[\rm(2)]
Let $n$ be a positive integer and 
$f\colon (A|_U)^{\oplus n}\to N|_U$ a morphism of ind-$A$-modules.
For any $x\in X$, there exist an open neighborhood $U$ of $x$
and a morphism $\widetilde{f|_U}\colon (A|_U)^{\oplus n}\to M|_U$ of ind-$A|_U$-modules
such that $\varphi|_U\circ\widetilde{f|_U} = f|_U$.
\end{itemize}
\end{lemma}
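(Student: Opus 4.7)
The plan is to reduce (1) to the local-lifting property of epimorphisms of sheaves, after identifying $A$-linear morphisms out of $A$ with global sections of an associated classical sheaf. By Proposition (3), $\ihom_A(A,N)\simeq N$; applying $\alpha_X$ gives $\hom_A(A,N)\simeq \alpha_X N$, and (the global-section case of) Proposition (7) then yields a natural bijection $\Hom_{\I A}(A,N)\simeq \Gamma(X;\alpha_X N)$. The same identification on $U$ gives $\Hom_{\I(A|_U)}(A|_U,N|_U)\simeq \Gamma\bigl(U;\alpha_U(N|_U)\bigr)$, and since $\alpha_X$ commutes with the open restriction $i_U^{-1}$, one has $\alpha_U(N|_U)\simeq (\alpha_X N)|_U$, so sections on $U$ are exactly sections of $\alpha_X N$ on $U$. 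Let $s\in\Gamma(X;\alpha_X N)$ be the section corresponding to $f$.

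Next, since $\alpha_X$ is exact, $\alpha_X\varphi\colon \alpha_X M\twoheadrightarrow \alpha_X N$ is an epimorphism of sheaves of $\Bbbk_X$-vector spaces. By the very definition of a sheaf epimorphism, for each $x\in X$ there exist an open neighborhood $U$ of $x$ and a section $t\in\Gamma(U;\alpha_X M)$ with $\alpha_X\varphi(t)=s|_U$. Under the identification above applied on $U$, the section $t$ corresponds to a morphism $\widetilde{f|_U}\colon A|_U\to M|_U$ in $\I(A|_U)$. Naturality of the identification in the second variable gives
\[
\varphi|_U\circ \widetilde{f|_U}\ \longleftrightarrow\ \alpha_X\varphi(t)=s|_U\ \longleftrightarrow\ f|_U,
\]
so $\varphi|_U\circ \widetilde{f|_U}=f|_U$, proving (1).

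For (2), I would apply (1) coordinate-wise. Precomposing $f$ with each canonical embedding $A\hookrightarrow A^{\oplus n}$ gives $n$ morphisms $f_i\colon A\to N$. Applying (1) to each produces open neighborhoods $U_1,\ldots,U_n$ of $x$ and lifts $\widetilde{f_i|_{U_i}}\colon A|_{U_i}\to M|_{U_i}$ with $\varphi|_{U_i}\circ \widetilde{f_i|_{U_i}}=f_i|_{U_i}$. Setting $U:=\bigcap_{i=1}^n U_i$, still an open neighborhood of $x$, and invoking the universal property of the finite coproduct $(A|_U)^{\oplus n}$ in $\I(A|_U)$, the maps $\widetilde{f_i|_U}$ assemble into a single morphism $\widetilde{f|_U}\colon (A|_U)^{\oplus n}\to M|_U$. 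Since equality of two morphisms out of a coproduct can be checked coordinate-wise, $\varphi|_U\circ \widetilde{f|_U}=f|_U$ follows from the corresponding equalities for the $f_i$.

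The main obstacle, such as it is, is purely formal: one must assemble the natural identification $\Hom_{\I A}(A,N)\simeq \Gamma(X;\alpha_X N)$, verify its naturality in $N$, and check that it is compatible with restriction to an open subset (which in turn uses that $\alpha_X$ commutes with $i_U^{-1}$ for open embeddings $i_U$). Once these compatibilities are in place, the heart of the argument is just the familiar local-sections lifting property of sheaf epimorphisms, and (2) follows from (1) by a routine coproduct argument.
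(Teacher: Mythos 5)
Your proposal is correct and follows essentially the same route as the paper: both identify $\Hom_{\I(A|_U)}(A|_U,N|_U)$ with $\Gamma(U;(\alpha_XN)|_U)$ via the isomorphisms $\ihom_A(A,N)\simeq N$ and $\hom_A=\alpha_X\ihom_A$, use exactness of $\alpha_X$ to reduce to the local lifting of sections along an epimorphism of sheaves, and then conclude via naturality of the identification. Part (2) is likewise the paper's argument (the paper writes out $n=2$ and obtains a common neighborhood directly, whereas you intersect the $U_i$; this is an immaterial difference).
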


\begin{proof}
(1)
For any open subset $V$ of $X$,
since 
$$\Hom_{\I (A|_V)}(A|_V, M|_V)
=
\Gamma(V; \hom_A(A, M))
=
\Gamma(V; \alpha_X\ihom_A(A, M))
%\simeq
%\Gamma(V, (\alpha_XM)|_V),
$$
and there exists an isomorphism
$$\Gamma(V; \alpha_X\ihom_A(A, M))
\simeq
\Gamma(V, \alpha_XM)$$
we have an isomorphism
$$\Phi_M^V\colon \Hom_{\I (A|_V)}(A|_V, M|_V)\simeq \Gamma(V, (\alpha_XM)|_V),$$
such that
the following diagram is a commutative:
\begin{equation*}
\tag*{$(\ast)_V$}
\vcenter{
\xymatrix@C=25pt@R=25pt@M=5pt{
\Hom_{\I (A|_V)}(A|_V, M|_V) \ar@{-}[r]^-{\overset{\Phi_M^V}{\sim}}
\ar@{->}[d]_-{\Hom_{\I (A|_V)}(A|_V,\ \varphi|_V)}
& \Gamma(V, (\alpha_XM)|_V)\ar@{->}[d]^-{\Gamma(V,\ (\alpha_X\varphi)|_V)}\\
\Hom_{\I (A|_V)}(A|_V, N|_V) \ar@{-}[r]^-{\overset{\Phi_N^V}{\sim}} & \Gamma(V,(\alpha_XN)|_V).
}}\end{equation*}
%See \cite[Prop.\:5.4.11]{KS01} for the details.

Let $x\in X$.
Since the functor $\alpha_X$ is exact,
the morphism $\alpha_X\varphi\colon \alpha_XM\to \alpha_XN$ is an epimorphism
and hence the map $(\alpha_X\varphi)_x\colon (\alpha_XM)_x\to (\alpha_XN)_x$ is surjective.
Let us denote by $n_{f,x}\in (\alpha_XN)_x$ the element corresponding to
$\underset{x\in V}{\varinjlim}f|_V\in\underset{x\in V}{\varinjlim}\Hom_{\I (A|_V)}(A|_V, N|_V)$
 by the isomorphism
 $$\underset{x\in V}{\varinjlim}\Hom_{\I (A|_V)}(A|_V, N|_V)\simeq (\alpha_XN)_x.$$
Since $(\alpha_X\varphi)_x\colon (\alpha_XM)_x\to (\alpha_XN)_x$ is surjective,
there exist $m_{f,x}\in (\alpha_XM)_x$ such that $(\alpha_X\varphi)_x(m_{f,x}) = n_{f,x}$,
and hence there exist an open neighborhood $U$ of $x$
and $m_f\in \Gamma(U, (\alpha_XM)|_U)$ whose stalk at $x$ is equal to $m_{f, x}$
such that $(\alpha_X\varphi)|_U(m_f) = n_f$.
Let us set $\widetilde{f|_U}\in \Hom_{\I (A|_U)}(A|_U, M|_U)$ the element corresponding to $m_f$
 by the isomorphism
 $$\Hom_{\I (A|_U)}(A|_U, M|_U)\simeq \Gamma(U, (\alpha_XM)|_U).$$
 Then we have $\varphi|_U\circ\widetilde{f|_U} = f|_U$
 by using the commutative diagram $(\ast)_U$.
 \bigskip

(2) 
%By the induction on $n$, it is enough to show that $n=2$.
Since the proof for general $n$ is similar, we shall prove the case of $n=2$.
Let $V$ be an open subset of $X$, 
$\varphi\colon M\twoheadrightarrow N$ an epimorphism of ind-$A|_V$-modules
and $f\colon A|_V\to N$ a morphism of ind-$A|_V$-modules.
Let $x\in V$.
We denote by $i_1, i_2\colon A\to A\oplus A$ the natural embeddings,
and set $f\circ i_1 = f_1,\ f\circ i_2 = f_2$.
Since $A$ satisfies the above condition,
there exist an open neighborhood $U\subset V$ of $x$
and two morphisms $\widetilde{{f_1}|_U}\colon A|_U\to M|_U,\ \widetilde{{f_2}|_U}\colon A|_U\to M|_U$
such that $\varphi|_U\circ\widetilde{{f_1}|_U} = {f_1}|_U,\ \varphi|_U\circ\widetilde{{f_2}|_U} = {f_2}|_U$:
\[\xymatrix@C=45pt@R=45pt@M=3pt{
{} & M|_U\ar@{->>}[rr]^-{\varphi|_U} & {} & N|_U\\
{} & {} & (A|_U)^{\oplus 2}\ar@{->}[ru]_-{f|_U}\ar@{.>}[lu]^-{\widetilde{f}} & {} & {}\\
A|_U\ar@{->}[rru]_-{{i_1}|_U}\ar@{->}[rrruu]^-{{f_1}|_U}\ar@{.>}[ruu]^-{\widetilde{{f_1}|_U}}
& {} & {} & {} & 
A|_U\ar@{->}[llu]^-{{i_2}|_U}\ar@{->}[luu]_-{{f_2}|_U}\ar@{.>}[llluu]_-{\widetilde{f_2}}
}\]
Hence, we have a morphism $\widetilde{f} \colon (A|_U)^{\oplus2}\to M|_U$ such that 
$\widetilde{f}\circ {i_1}|_U = \widetilde{{f_1}|_U},\ \widetilde{f}\circ {i_2}|_U = \widetilde{{f_2}|_U}$.
Since there exist isomorphisms
\begin{align*}
(\varphi|_U\circ\widetilde{f})\circ {i_1}|_U
&\simeq 
\varphi|_U\circ(\widetilde{f}\circ {i_1}|_U)
\simeq
\varphi|_U\circ\widetilde{{f_1}|_U}
\simeq
{f_1}|_U,\\
(\varphi|_U\circ\widetilde{f})\circ {i_2}|_U
&\simeq 
\varphi|_U\circ(\widetilde{f}\circ {i_2}|_U)
\simeq
\varphi|_U\circ\widetilde{{f_2}|_U}
\simeq
{f_2}|_U
\end{align*}
we have $\varphi|_U\circ\widetilde{f} = f|_U$
by the universal property of the coproduct.
Therefore,
we have a morphism $\widetilde{f}\colon (A|_U)^{\oplus 2}\to M|_U$ of ind-$A|_U$-modules
and a commutative diagram:
\[\xymatrix@C=45pt@R=25pt{
M|_U\ar@{->>}[r]^-{\varphi|_U} & N|_U\\
(A|_U)^{\oplus 2}.\ar@{->}[u]^-{\widetilde{f}}\ar@{->}[ru]_-{f|_U} & {}
}\]
This implies that $A^{\oplus 2}$ satisfy the above condition.
\end{proof}

Let us denote by $\Icoh A$ the full subcategory of $\I A$ consisting of ind-$A$-modules
which is coherent over $A$.
Although it seems that the category $\Icoh A$ is abelian by
\cite[Prop.\:6.1.19]{KS06} and \cite[Thm.\:5.16 of \S\:3.5]{Pop},
we shall prove it in this paper.
For this purpose, let us prove some finiteness properties for ind-modules as below.

\newpage

\begin{lemma}\label{lem1}
Let us consider an exact sequence in $\I A:$
$$0\longrightarrow M'\xrightarrow{\ \mu\ } M \xrightarrow{\ \nu\ } M''\longrightarrow 0.$$ 
\begin{itemize}
\item[\rm (1)]
If $M$ is locally finitely generated over $A$, then $M''$ is also locally finitely generated over $A$.

\item[\rm (2)]
$M'$ and $M''$ are locally finitely generated over $A$, then $M$ is also locally finitely generated over $A$.
\end{itemize}
\end{lemma}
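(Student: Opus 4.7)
For part (1), the argument is almost immediate. Given $x\in X$, choose by hypothesis an open neighborhood $U$ of $x$ together with an epimorphism $(A|_U)^{\oplus n}\twoheadrightarrow M|_U$ in $\I(A|_U)$. Since the inverse image functor $i_U^{-1}$ is exact, the restriction $\nu|_U\colon M|_U\twoheadrightarrow M''|_U$ is still an epimorphism in $\I(A|_U)$. Composing, we obtain an epimorphism $(A|_U)^{\oplus n}\twoheadrightarrow M''|_U$, which is what we need.

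For part (2), fix $x\in X$. By hypothesis there exist open neighborhoods $U_1,U_2$ of $x$ and epimorphisms $p'\colon (A|_{U_1})^{\oplus m}\twoheadrightarrow M'|_{U_1}$ and $p''\colon (A|_{U_2})^{\oplus n}\twoheadrightarrow M''|_{U_2}$. Restricting everything to $U := U_1\cap U_2$, we may assume these are defined over the same neighborhood $U$. Now the key step is to lift $p''$ through the epimorphism $\nu|_U\colon M|_U\twoheadrightarrow M''|_U$: by Lemma \ref{lem0} (2), applied on $U$ with $\varphi = \nu|_U$, there exist an open neighborhood $V\subset U$ of $x$ and a morphism $\widetilde{p''}\colon (A|_V)^{\oplus n}\to M|_V$ of ind-$A|_V$-modules with $\nu|_V\circ\widetilde{p''} = p''|_V$.

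On $V$ we now form the morphism
\[
q\colon (A|_V)^{\oplus(m+n)} \simeq (A|_V)^{\oplus m}\oplus (A|_V)^{\oplus n}\longrightarrow M|_V,
\]
whose first component is $\mu|_V\circ p'|_V$ and whose second component is $\widetilde{p''}$. Together with the canonical split short exact sequence on $(A|_V)^{\oplus(m+n)}$, this fits into a commutative diagram with exact rows in the abelian category $\I(A|_V)$:
\[
\xymatrix@C=18pt@R=22pt{
0 \ar[r] & (A|_V)^{\oplus m}\ar[r]\ar@{->>}[d]^-{\mu|_V\circ p'|_V} & (A|_V)^{\oplus(m+n)}\ar[r]\ar[d]^-{q} & (A|_V)^{\oplus n}\ar[r]\ar@{->>}[d]^-{p''|_V} & 0 \\
0\ar[r] & M'|_V\ar[r]_-{\mu|_V} & M|_V \ar[r]_-{\nu|_V} & M''|_V \ar[r] & 0.
}
\]
The left vertical arrow is an epimorphism as a composition of epimorphisms, and the right vertical arrow is $p''|_V$, which is an epimorphism. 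Applying the standard ``two-out-of-three'' principle for epimorphisms in an abelian category (a direct consequence of the snake lemma: the induced sequence on cokernels is exact, hence the middle cokernel vanishes), $q$ is an epimorphism. This yields an exact sequence $(A|_V)^{\oplus(m+n)}\to M|_V\to 0$, as required.

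The only non-routine point is the lifting in the second part, which is precisely why Lemma \ref{lem0} (2) is stated; once the lift $\widetilde{p''}$ is available on a common neighborhood $V$, everything else is a formal diagram argument in the abelian category $\I(A|_V)$.
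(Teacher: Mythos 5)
Your proof is correct and follows essentially the same route as the paper's: part (1) is the same composition with $\nu|_U$, and part (2) lifts the presentation of $M''$ through $\nu$ via Lemma \ref{lem0} (2), assembles the map on $(A|_V)^{\oplus(m+n)}$, and concludes by the snake lemma applied to the same three-column diagram. The only blemish is the label on your left vertical arrow, which should be $p'|_V$ (with target $M'|_V$) rather than $\mu|_V\circ p'|_V$; the argument is otherwise identical in substance.
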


\begin{proof}
(1)
Let $x\in X$.
Since $M$ is locally finitely generated over $A$,
there exist an open neighborhood $U$ of $x$, a positive integer $n$
and an epimorphism $\varphi\colon (A|_U)^{\oplus n}\twoheadrightarrow M|_U$.
Hence we have an epimorphism 
$\nu|_U\circ \varphi \colon (A|_U)^{\oplus n}\twoheadrightarrow M|_U\twoheadrightarrow M''|_U$
of ind-$A|_U$-modules from $(A|_U)^{\oplus n}$ to $M''|_U$:
This implies that $M''$ is locally finitely generated over $A$.
%\medskip

(2)
Let $x\in X$.
Since $M'$ and $M''$ are locally finitely generated over $A$, 
there exist an open neighborhood $V$ of $x$, two positive integers $m, n$
and two epimorphisms $\varphi\colon (A|_V)^{\oplus m}\to M'|_V,\ \psi\colon (A|_V)^{\oplus n}\to M''|_V$.
Moreover by Lemma \ref{lem0} (2),
there exist an open neighborhood $U\subset V$ of $x$ and
a morphism $\widetilde{\psi}\colon (A|_U)^{\oplus} \to M|_U$ of in-$A|_U$-modules
such that $\nu|_U\circ \widetilde{\psi} = \psi|_U$.
Let us denote by $p\colon A^{\oplus m+n} \to A^{\oplus n}$ the natural epimorphism and 
by $i\colon A^{\oplus m} \to A^{\oplus m+n},\ j\colon A^{\oplus n} \to A^{\oplus m+n}$
the natural monomorphism.
Moreover, by the universal property of coproduct,
there exists a morphism $\lambda\colon (A|_U)^{\oplus m+n}\to M|_U$ of ind-$A|_U$-modules
such that $\lambda\circ j|_U = \widetilde{\psi},\
\lambda \circ i|_U= \mu|_U\circ \varphi|_U$:
\[\xymatrix@R=25pt@C=30pt@M=5pt{
(A|_U)^{\oplus m} \ar@{->}[r]^-{i|_U} \ar@{->>}[d]_-{\varphi|_U} &
(A|_U)^{\oplus (m+n)} \ar@<1.0ex>@{->}[r]^-{p|_U} \ar@{.>}[d]_-{\lambda} &
(A|_U)^{\oplus n}\ar@<0.3ex>@{^{(}->}[l]^-{j|_U}
\ar@{->}[ld]^-{\widetilde{\psi}} \ar@<-1.0ex>@{->>}[d]_-{\psi|_U}\\
M'|_U \ar@{->}[r]_-{\mu|_U} & M|_U \ar@{->}[r]_-{\nu|_U} & M''|_U.
}\]
By using the universal property of cokernels,
we have a morphism $\psi'\colon (A|_U)^{\oplus n}\to M''|_U$
such that $\psi'\circ p|_U = \nu|_U\circ \lambda$. 
Since $p|_U$ is an epimorphism and there exist isomorphisms
$$\psi|_U\circ p|_U
\simeq
\nu|_U\circ \widetilde{\psi}\circ p|_U
\simeq
\beta|_U\circ \lambda\circ j|_U \circ p|_U
\simeq
\psi'|_U\circ p|_U\circ j|_U \circ p|_U
\simeq
\psi'\circ p|_U,$$
we obtain $\psi|_U = \psi'$. 
Therefore, we have a commutative diagram whose horizontal arrows are exact:
\[\xymatrix@R=25pt@C=30pt@M=5pt{
0 \ar@{->}[r] & (A|_U)^{\oplus m} \ar@{->}[r]^-{i|_U} \ar@{->>}[d]_-{\varphi|_U}
& (A|_U)^{\oplus (m+n)} \ar@{->}[r]^-{p|_U} \ar@{->}[d]_-{\lambda}
& (A|_U)^{\oplus n} \ar@{->}[r] \ar@{->>}[d]_-{\psi|_U} & 0\\
0 \ar@{->}[r] & M'|_U \ar@{->}[r]_-{\mu|_U} & M|_U \ar@{->}[r]_-{\nu|_U} & M''|_U \ar@{->}[r] & 0.
}\]
By using the snake lemma,
we have an isomorphism $\Coker\lambda \simeq 0$ and hence $\lambda$ is an epimorphism.
This implies that $M$ is locally finitely generated over $A$.
\end{proof}

Although, it is not used in this paper,
the notion of locally finitely presented can be paraphrased as below.
\begin{sublemma}\label{sublem1}
Let $M$ be an ind-$A$-module.
The following two conditions are equivalent:
\begin{itemize}
\item[\rm(i)]
$M$ is locally finitely presented over $A$.

\item[\rm(ii)]
$M$ is locally finitely generated over $A$ and 
for any $x\in X$ there exists an open neighborhood $U$ of $x$ such that
for any positive integer $l$ and
any epimorphism $\chi\colon (A|_U)^{\oplus l}\twoheadrightarrow M|_U$ of ind-$A|_U$-modules
the kernel $\Ker\chi$ of $\chi$ is locally finitely generated over $A|_U$. 
\end{itemize}
\end{sublemma}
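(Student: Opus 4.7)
The direction (ii) $\Rightarrow$ (i) is essentially formal. Starting from an open neighborhood $U$ of $x$ as in (ii), after possibly shrinking $U$ one uses the local finite generation of $M$ to obtain an epimorphism $\chi\colon (A|_U)^{\oplus n}\twoheadrightarrow M|_U$; by hypothesis $\Ker\chi$ is locally finitely generated on $U$, so for some open $V\subset U$ around $x$ there is an epimorphism $(A|_V)^{\oplus m}\twoheadrightarrow(\Ker\chi)|_V$, and composing with the inclusion $\Ker\chi\hookrightarrow(A|_U)^{\oplus n}$ produces the exact sequence $(A|_V)^{\oplus m}\to(A|_V)^{\oplus n}\to M|_V\to 0$ required by (i).

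For the substantive direction (i) $\Rightarrow$ (ii), the plan is a Schanuel-type comparison between an arbitrary epimorphism from a finite-rank free ind-$A|_U$-module onto $M|_U$ and the one coming from the presentation furnished by (i). Fix $x\in X$ and let $(A|_U)^{\oplus m}\to (A|_U)^{\oplus n}\xrightarrow{\varphi} M|_U\to 0$ be such a presentation; since $\Ker\varphi$ is the image of $(A|_U)^{\oplus m}\to(A|_U)^{\oplus n}$, Lemma \ref{lem1}(1) shows that $\Ker\varphi$ is locally finitely generated on $U$. Given any $l\ge 1$ and any epimorphism $\chi\colon (A|_U)^{\oplus l}\twoheadrightarrow M|_U$, I would form the epimorphism $\Phi:=(\varphi,-\chi)\colon (A|_U)^{\oplus n}\oplus (A|_U)^{\oplus l}\twoheadrightarrow M|_U$ and set $K:=\Ker\Phi$.

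Fixing any $y\in U$ and applying Lemma \ref{lem0}(2) on a small enough open neighborhood $V\subset U$ of $y$, I would lift $\chi|_V$ through $\varphi|_V$ and $\varphi|_V$ through $\chi|_V$ to obtain morphisms $g\colon (A|_V)^{\oplus l}\to (A|_V)^{\oplus n}$ with $\varphi|_V\circ g=\chi|_V$ and $f\colon (A|_V)^{\oplus n}\to (A|_V)^{\oplus l}$ with $\chi|_V\circ f=\varphi|_V$. The universal property of $\oplus$ then turns $(g,\id)$ and $(\id,f)$ into morphisms into $K|_V$ that split the two short exact sequences obtained by restricting the two projections of $(A|_V)^{\oplus n}\oplus(A|_V)^{\oplus l}$ to $K|_V$. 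This yields isomorphisms in $\I(A|_V)$
\[\Ker\varphi|_V\oplus(A|_V)^{\oplus l}\;\simeq\;K|_V\;\simeq\;\Ker\chi|_V\oplus(A|_V)^{\oplus n}.\]
The left-hand side is locally finitely generated by Lemma \ref{lem1}(2), hence so is the right-hand side, and Lemma \ref{lem1}(1) applied to the split projection $K|_V\twoheadrightarrow \Ker\chi|_V$ gives that $\Ker\chi|_V$ is locally finitely generated. Since $y\in U$ was arbitrary, $\Ker\chi$ is locally finitely generated on $U$.

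The step that requires the most care is identifying the kernels of the two projections of $K|_V$ as $\Ker\varphi|_V$ and $\Ker\chi|_V$ respectively and exhibiting the splittings as honest morphisms in the ind-module category $\I(A|_V)$; this is precisely where Lemma \ref{lem0}(2) — a form of \emph{local projectivity} of finite-rank free ind-$A|_V$-modules — is used. Once those splittings are in place, the conclusion is a routine combination of parts (1) and (2) of Lemma \ref{lem1}.
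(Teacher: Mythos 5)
Your proof is correct. The direction (ii) $\Rightarrow$ (i) is the same as the paper's (compose a local presentation of $\Ker\chi$ with the inclusion into the free module). For (i) $\Rightarrow$ (ii), however, you take a genuinely different route. The paper never forms the fibre product $K=\Ker(\varphi,-\chi)$: it lifts the presentation epimorphism $\psi\colon (A|_U)^{\oplus n}\to M|_U$ through $\chi$ (one application of Lemma \ref{lem0} (2)) to get $\widetilde{\psi|_V}\colon (A|_V)^{\oplus n}\to (A|_V)^{\oplus l}$, takes the induced morphism $\mu\colon\Ker\psi|_V\to\Ker\chi|_V$ on kernels, identifies $\Coker\mu\simeq\Coker\widetilde{\psi|_V}$ by the snake lemma, and then exhibits $\Ker\chi|_V$ as an extension of $\Coker\mu$ (a quotient of $(A|_V)^{\oplus l}$) by $\Image\mu$ (a quotient of $\Ker\psi|_V$, itself a quotient of $(A|_V)^{\oplus m}$), concluding with Lemma \ref{lem1} (2). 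Your Schanuel-type argument needs two local liftings where the paper needs one, but in exchange the two split exact sequences give the cleaner and stronger statement $\Ker\varphi|_V\oplus(A|_V)^{\oplus l}\simeq K|_V\simeq\Ker\chi|_V\oplus(A|_V)^{\oplus n}$, after which only the trivial closure properties of local finite generation under direct sums and direct summands are needed. Both arguments rest on exactly the same ingredients --- Lemma \ref{lem0} (2) as local projectivity of finite free ind-$A$-modules, exactness of restriction, and Lemma \ref{lem1} --- so the difference is one of packaging rather than substance; in particular both share the same harmless imprecision of tacitly arranging the neighborhood from (ii) to carry an epimorphism coming from local finite generation.
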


\begin{proof}
First, we prove that $\rm (i)\Rightarrow (ii)$.
Let us assume that $M$ is locally finitely presented over $A$.
It is clear that $M$ is locally finitely generated over $A$.
Let $x\in X$.
Since $M$ is locally finitely presented over $A$,
there exist two positive integers $m, n$, an open neighborhood $U$ of $x$
and an exact sequence
$$(A|_U)^{\oplus m}\xrightarrow{\ \varphi\ }(A|_U)^{\oplus n}\xrightarrow{\ \psi\ } M|_U\longrightarrow 0$$
 in $\I(A|_U)$ by the definition.
%Since there exists an epimorphism
%$$(A|_U)^{\oplus m}\twoheadrightarrow \Image\varphi\simto\Ker\psi,$$
%$\Ker\psi$ is locally finitely generated over $A|_U$.

Let $l$ be a positive integer and
$\chi\colon (A|_U)^{\oplus l}\twoheadrightarrow M|_U$ an epimorphism of ind-$A|_U$-modules.
By Lemma \ref{lem0} (2),
for any $z\in U$ there exist an open neighborhood $V\subset U$ of $z$
and a morphism $\widetilde{\psi|_V}\colon (A|_V)^{\oplus n}\to (A|_V)^{\oplus l}$ of ind-$A|_V$-modules
such that $\psi|_V = \chi|_V\circ \widetilde{\psi|_V}$,
and hence we have a commutative diagram:
\[\xymatrix{
0\ar@{->}[r] & \Ker\psi|_V 
\ar@{->}[r] \ar@{->}[d]_-{\mu} & (A|_V)^{\oplus n}
\ar@{->}[rd]^-{\psi|_V} \ar@{->}[r]^-{\psi|_V} \ar@{->}[d]_-{\widetilde{\psi|_V}}
& M|_V  \ar@{->}[r] \ar@{=}[d] & 0 \\
0 \ar@{->}[r] &
\Ker\chi|_V \ar@{->}[r] & (A|_V)^{\oplus l} \ar@{->}[r]_-{\chi|_V} & M|_V \ar@{->}[r] & 0.
}\]
Here, the morphism $\mu\colon \Ker\psi|_V\to \Ker\chi|_V$ of ind-$A|_V$-modules is obtained
by using the universal property of kernels.
By using the snake lemma,
we have an exact sequence
$$\Ker\mu\to \Ker\widetilde{\psi|_V}\to\Ker(\id_{M|_V})\to
\Coker\mu\to\Coker\widetilde{\psi|_V}\to\Coker(\id_{M|_V})$$
 in $\I(A|_V)$
and hence we have an isomorphism $\Coker\mu\simto \Coker\widetilde{\psi|_V}$
because $\Ker(\id_{M|_V})$ and $\Coker(\id_{M|_V})$ are $0$.
So that there exists an epimorphism $(A|_V)^{\oplus l}\twoheadrightarrow\Coker\mu$
and hence $\Coker\mu$ is locally finitely generated over $A|_V$.
Since there exist epimorphisms
$$(A|_V)^{\oplus m}\twoheadrightarrow\Image\varphi|_V\simto\Ker\psi|_V\twoheadrightarrow\Image\mu,$$
$\Image\mu$ is locally finitely generated over $A|_V$.
Since there exists an exact sequence in $\I(A|_V)$
$$0\to \Image\mu\to \Ker\chi|_V\to \Coker\mu\to 0$$
$\Ker\chi|_V$ is locally finitely generated over $A|_V$
by Lemma \ref{lem1} (2).
This implies that $\Ker\chi$ is locally finitely generated over $A|_U$.
%\medskip

We shall prove that $\rm (ii)\Rightarrow (i)$.
Let us assume that $M$ is locally finitely generated over $A$.
Let $x\in X$.
Then  there exist open neighborhood $U$ of $x$,
a positive integer $n$ and an epimorphism $\varphi\colon(A|_U)^{\oplus n}\twoheadrightarrow M|_U$
of ind-$A|_U$-modules by the definition.
By the assumption, $\Ker\varphi$ is locally finitely generated over $A|_U$,
namely, there exist an open neighborhood $V\subset U$ of $x$, a positive integer $m$
and an epimorphism $(A|_V)^{\oplus m}\twoheadrightarrow \Ker\varphi|_V$.
Then we have a commutative diagram:
\[\xymatrix{(A|_V)^{\oplus m} \ar@{->}[r] \ar@{->>}[rd]
& (A|_V)^{\oplus n} \ar@{->}[r]^-{\varphi|_V} & M|_V \ar@{->}[r] & 0\\
{} & \Ker\varphi|_V \ar@{^{(}->}[u]& {}
}\]
and hence we obtain an exact sequence in $\I(A|_V)$
$$(A|_V)^{\oplus m} \to (A|_V)^{\oplus n}\to M|_V\to 0.$$ 
This implies that $M$ is locally finitely presented over $A$.
\end{proof}

\begin{sublemma}\label{sublem2}
Let $M$ be an ind-$A$-module.

\begin{itemize}
\item[\rm (1)]
We assume that $M$ is locally finitely generated over $A$.
Thus,
for any $x\in X$ there exist an open neighborhood $U$ of $x$,
a positive integer $n$ and an epimorphism $\varphi_U\colon (A|_U)^{\oplus n}\twoheadrightarrow M|_U$.
If the kernel $\Ker\varphi_U$ of the epimorphism $\varphi_U$ is locally finitely generated over $A|_U$ for each $U$,
$M$ is locally finitely presented over $A$.

\item[\rm (2)]
We assume that $M$ is locally finitely presented over $A$.
Thus,
for any $x\in X$ there exist an open neighborhood $U$ of $x$,
positive integers $m, n$ and an exact sequence 
$$(A|_U)^{\oplus m}\xrightarrow{\ \psi_U\ }(A|_U)^{\oplus n}\xrightarrow{\ \varphi_U\ }M|_U\to0.$$
Then the kernel $\Ker\varphi_U$of $\varphi_U$ is locally finitely generated over $A|_U$ for each $U$.
\end{itemize}
\end{sublemma}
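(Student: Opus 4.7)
The plan for part (1) is essentially routine and should be written as a short unpacking of definitions. Fix $x \in X$ and invoke the hypothesis to obtain an open neighborhood $U$ of $x$ together with an epimorphism $\varphi_U \colon (A|_U)^{\oplus n} \twoheadrightarrow M|_U$ whose kernel $\Ker\varphi_U$ is locally finitely generated over $A|_U$. By the definition of locally finitely generated, applied to $\Ker\varphi_U$ at $x$, I can shrink $U$ to a smaller open neighborhood $V$ of $x$ on which there is a positive integer $m$ and an epimorphism $(A|_V)^{\oplus m} \twoheadrightarrow \Ker\varphi_U|_V$. Composing with the inclusion $\Ker\varphi_U|_V \hookrightarrow (A|_V)^{\oplus n}$ yields an exact sequence $(A|_V)^{\oplus m} \to (A|_V)^{\oplus n} \xrightarrow{\varphi_U|_V} M|_V \to 0$ in $\I(A|_V)$, which is exactly what is needed.

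The plan for part (2) will mirror the snake-lemma argument used for (i)$\Rightarrow$(ii) of Sublemma \ref{sublem1}. Fix $U$ and the epimorphism $\varphi_U \colon (A|_U)^{\oplus n} \twoheadrightarrow M|_U$ as in the statement, and pick $x \in U$. Because $M$ is locally finitely presented, I can choose an open neighborhood $W \subset U$ of $x$, positive integers $m', n'$ and an exact sequence $(A|_W)^{\oplus m'} \xrightarrow{\varphi} (A|_W)^{\oplus n'} \xrightarrow{\psi} M|_W \to 0$. Applying Lemma \ref{lem0} (2) to the epimorphism $\varphi_U|_W$ and the morphism $\psi$ (further shrinking $W$ if necessary), I obtain a lift $\widetilde{\psi} \colon (A|_W)^{\oplus n'} \to (A|_W)^{\oplus n}$ with $\varphi_U|_W \circ \widetilde{\psi} = \psi$. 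The universal property of kernels then supplies an induced morphism $\mu \colon \Ker\psi \to \Ker\varphi_U|_W$ fitting into a commutative diagram with exact rows and with $\id_{M|_W}$ as the rightmost vertical map.

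Applying the snake lemma to this diagram gives an isomorphism $\Coker\mu \simeq \Coker\widetilde{\psi}$, since both the kernel and cokernel of $\id_{M|_W}$ vanish. The cokernel $\Coker\widetilde{\psi}$ is a quotient of $(A|_W)^{\oplus n}$ and is therefore locally finitely generated over $A|_W$, while $\Image\mu$ is a quotient of $\Ker\psi \simeq \Image\varphi$, which is itself a quotient of $(A|_W)^{\oplus m'}$, so $\Image\mu$ is locally finitely generated as well. Invoking Lemma \ref{lem1} (2) on the short exact sequence $0 \to \Image\mu \to \Ker\varphi_U|_W \to \Coker\mu \to 0$ in $\I(A|_W)$ shows that $\Ker\varphi_U|_W$ is locally finitely generated over $A|_W$. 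Since $x \in U$ was arbitrary, this proves that $\Ker\varphi_U$ is locally finitely generated over $A|_U$. The only delicate step is ensuring that the neighborhood $W$ can be chosen small enough so that a local presentation of $M$ and the lift provided by Lemma \ref{lem0} (2) exist simultaneously; this is handled by taking the intersection of the two neighborhoods produced by each hypothesis.
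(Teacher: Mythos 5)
Your argument is correct, and part (1) is essentially the paper's own proof: shrink the given neighborhood so that a finite family of generators of $\Ker\varphi_U$ exists, and splice the resulting epimorphism onto the kernel into the sequence. For part (2), however, you take a genuinely different and much heavier route than the paper. The statement only concerns the kernel of the particular $\varphi_U$ appearing in the given presentation $(A|_U)^{\oplus m}\xrightarrow{\ \psi_U\ }(A|_U)^{\oplus n}\xrightarrow{\ \varphi_U\ }M|_U\to 0$, so exactness at the middle term gives $\Ker\varphi_U\simeq\Image\psi_U$, a quotient of $(A|_U)^{\oplus m}$, and the conclusion is immediate; that single observation is the paper's entire proof of (2). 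You instead prove the stronger, presentation-independent assertion that the kernel of an \emph{arbitrary} epimorphism $(A|_U)^{\oplus n}\twoheadrightarrow M|_U$ is locally finitely generated, by lifting an auxiliary presentation through Lemma \ref{lem0} (2) and running the snake lemma; this is valid (it is precisely the implication (i)$\Rightarrow$(ii) of Sublemma \ref{sublem1}, whose proof you are in effect reproducing). What your version buys is that the strengthened form is the one the paper actually invokes later --- in Lemma \ref{lem2} (2) and (3) the sublemma is applied to epimorphisms $\nu|_U\circ\varphi$ that do not come packaged with a $\psi_U$ --- so your argument closes a small gap between what is stated and what is used; what it costs is length and an extra dependence on Lemma \ref{lem0} (2) that the one-line proof avoids.
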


\begin{proof}
(1)
Let us assume that 
the kernel $\Ker\varphi_U$ of the epimorphism $\varphi_U$ is locally finitely generated over $A|_U$ for each $U$.
Then for any $z\in U$ there exist an open neighborhood $V\subset U$ of $z$,
a positive integer $m$ and an epimorphism $\psi\colon (A|_V)^{\oplus m}\to \Ker\varphi|_V$,
and hence we have a commutative diagram:
\[\xymatrix{(A|_V)^{\oplus m} \ar@{->}[r] \ar@{->>}[rd]_-\psi
& (A|_V)^{\oplus n} \ar@{->}[r]^-{\varphi|_V} & M|_V \ar@{->}[r] & 0\\
{} & \Ker\varphi|_V. \ar@{^{(}->}[u]& {}
}\]
Therefore, we have an exact sequence in $\I(A|_V)$
$$(A|_V)^{\oplus m} \to (A|_V)^{\oplus n}\to M|_V\to 0.$$ 
This implies that $M|_U$ is locally finitely presented over $A|_U$
and hence $M$ is locally finitely presented over $A$.
%\medskip

(2)
By the assumption,
we have a commutative diagram:
\[\xymatrix{(A|_V)^{\oplus m} \ar@{->}[r]^-{\psi} \ar@{->>}[rd]
& (A|_U)^{\oplus n} \ar@{->}[r]^-{\varphi} & M|_U \ar@{->}[r] & 0\\
{} & \Image\psi\simeq\Ker\varphi. \ar@{^{(}->}[u]& {}
}\]
This implies that the kernel $\Ker\varphi$ of $\varphi$ is locally finitely generated over $A|_U$.
\end{proof}

\begin{lemma}\label{lem2}
Let us consider an exact sequence in $\I A:$
$$0\longrightarrow M'\xrightarrow{\ \mu\ } M \xrightarrow{\ \nu\ } M''\longrightarrow 0.$$ 
\begin{itemize}
\item[\rm (1)]
If $M$ is locally finitely presented over $A$ and $M'$ is locally finitely generated over $A$,
then $M''$ is locally finitely presented over $A$.

\item[\rm (2)]
If $M$ is locally finitely generated over $A$ and $M''$ is locally finitely presented over $A$,
then $M'$ is locally finitely generated over $A$.

\item[\rm (3)]
If $M'$ and $M''$ are locally finitely presented over $A$,
then $M$ is also locally finitely presented over $A$.
\end{itemize}
\end{lemma}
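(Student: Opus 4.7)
The three parts follow a common template: work locally at a point $x\in X$, shrink neighborhoods as needed so that all relevant finiteness data become available on a common open $U$, and combine the preceding lemmas with the snake lemma applied to a suitable commutative diagram.

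For (1), since $M$ is locally finitely presented, Sublemma~\ref{sublem2}~(2) yields on some neighborhood $U$ of $x$ an epimorphism $\pi\colon(A|_U)^{\oplus n}\twoheadrightarrow M|_U$ whose kernel $K'$ is locally finitely generated; after shrinking we may assume $M'|_U$ is also locally finitely generated. Composing with $\nu|_U$ gives an epimorphism $(A|_U)^{\oplus n}\twoheadrightarrow M''|_U$, whose kernel $K$ fits into a short exact sequence $0\to K'\to K\to M'|_U\to 0$ obtained by applying the snake lemma to the two rows $0\to K'\to (A|_U)^{\oplus n}\to M|_U\to 0$ and $0\to M'|_U\to M|_U\to M''|_U\to 0$. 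Thus $K$ is locally finitely generated by Lemma~\ref{lem1}~(2), and Sublemma~\ref{sublem2}~(1) concludes that $M''$ is locally finitely presented.

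For (2), fix $x\in X$ and choose a neighborhood $U$ on which $M''$ satisfies Sublemma~\ref{sublem1}~(ii) (so every epimorphism from a free ind-module onto $M''|_U$ has locally finitely generated kernel) and $M$ admits an epimorphism $\pi\colon (A|_U)^{\oplus n}\twoheadrightarrow M|_U$. The composition $\nu|_U\circ\pi$ is an epimorphism onto $M''|_U$, so its kernel $K$ is locally finitely generated. Since the restriction of $\pi$ to $K$ lands in $M'|_U$ (viewed as a sub-ind-$A|_U$-module of $M|_U$ via $\mu$) and is surjective onto it, Lemma~\ref{lem1}~(1) gives that $M'$ is locally finitely generated.

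For (3), on a common neighborhood $U$ of $x$ choose presentations $(A|_U)^{\oplus m'}\xrightarrow{\psi'}(A|_U)^{\oplus n'}\xrightarrow{\varphi'} M'|_U\to 0$ and $(A|_U)^{\oplus m''}\xrightarrow{\psi''}(A|_U)^{\oplus n''}\xrightarrow{\varphi''} M''|_U\to 0$. Running the horseshoe construction from the proof of Lemma~\ref{lem1}~(2)---lifting $\varphi''$ through $\nu|_U$ via Lemma~\ref{lem0}~(2) and then assembling with $\mu|_U\circ\varphi'$ by the universal property of the coproduct---yields, after shrinking $U$, an epimorphism $\lambda\colon (A|_U)^{\oplus n'+n''}\twoheadrightarrow M|_U$ sitting in a $3\times 3$ diagram whose top row is the split exact sequence $0\to (A|_U)^{\oplus n'}\to (A|_U)^{\oplus n'+n''}\to (A|_U)^{\oplus n''}\to 0$. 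The snake lemma then produces an exact sequence $0\to\Ker\varphi'\to\Ker\lambda\to\Ker\varphi''\to 0$, and the two outer terms are locally finitely generated by Sublemma~\ref{sublem2}~(2). Hence $\Ker\lambda$ is locally finitely generated by Lemma~\ref{lem1}~(2), and Sublemma~\ref{sublem2}~(1) yields that $M$ is locally finitely presented.

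The most delicate step is part (2): one must use the stronger characterization in Sublemma~\ref{sublem1}~(ii), because the epimorphism onto $M''|_U$ arising from $\nu|_U\circ\pi$ is \emph{not} chosen as part of a prescribed presentation of $M''$, so Sublemma~\ref{sublem2}~(2) alone is insufficient. Parts (1) and (3) are then essentially diagram chases, once the snake lemma is set up on the correct commutative diagram.
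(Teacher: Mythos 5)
Your proof is correct and follows essentially the same route as the paper's: the same compositions $\nu|_U\circ\varphi$ with the snake lemma in (1) and (2), the same horseshoe/nine-lemma diagram in (3), and the same appeals to Lemma~\ref{lem1} and Sublemma~\ref{sublem2}. The only divergence is in (2), where you invoke Sublemma~\ref{sublem1}(ii) to control the kernel of the non-prescribed epimorphism $\nu|_U\circ\pi$ onto $M''|_U$; the paper cites Sublemma~\ref{sublem2}(2) at that point, so your version is, if anything, the more precisely justified one.
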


\begin{proof}
(1)
Let $x\in X$.
Since $M$ is locally finitely generated over $A$,
there exist an open neighborhood $U$ of $x$, two positive integers $m, n$
and an exact sequence
$(A|_U)^{\oplus m}\longrightarrow(A|_U)^{\oplus n}\xrightarrow{\ \varphi\ } M|_U\to 0$.
By using Sublemma \ref{sublem2} (2),
the kernel $\Ker\varphi$ of $\varphi$ is locally finitely generated over $A|_U$.
Let us set $\psi := \nu|_U\circ \varphi \colon (A|_U)^{\oplus n}\twoheadrightarrow M|_U\twoheadrightarrow M''|_U$.
By using Sublemma \ref{sublem2} (1), it is enough to show that $\Ker\psi$ is locally finitely generated over $A|_U$.
Since there exists a commutative diagram whose horizontal arrows are exact:
\[\xymatrix@M=7pt{
{} & {} & 0\ar@{->}[d] & M'|_U\ar@{->}[d]^-{\mu|_U} & {}\\
{} & \Ker \varphi\ar@{->}[r]\ar@{->}[d]^-\chi & (A|_U)^{\oplus n}\ar@{->}[r]^-\varphi\ar@{=}[d]
& M|_U\ar@{->>}[d]^-{\nu|_U} \ar@{->}[r] & 0\\
0\ar@{->}[r]  & \Ker \psi\ar@{->}[r]\ar@{->}[d] & (A|_U)^{\oplus n}\ar@{->}[r]^-\psi\ar@{->}[d] 
& M''|_U\ar@{->}[d] \\
{} & \Coker\chi & 0 & 0,
}\]
where $\chi$ is given by using the universal property of kernels,
we have an isomorphism $M'|_U\simto \Coker\chi$
by using the snake lemma.
Hence we have an exact sequence in $\I(A|_U)$:
$$0\to \Ker\varphi\to \Ker\psi\to M'|_U\to0.$$
Since $M'|_U$ and $\Ker\varphi$ are locally finitely generated over $A|_U$,
 the kernel $\Ker\psi$ of $\psi$ is locally finitely generated over $A|_U$ by Lemma \ref{lem1} (2).
Therefor, $M''$ is locally finitely preseted over $A$.
%\medskip

(2)
Let $x\in X$.
Since $M$ is locally finitely generated over $A$,
there exist an open neighborhood $U$ of $x$, a positive integer $n$
and an epimorphism $\varphi\colon (A|_U)^{\oplus n}\to M|_U$.
Let us set $\psi := \nu|_U\circ \varphi \colon (A|_U)^{\oplus n}\twoheadrightarrow M|_U\twoheadrightarrow M''|_U$.
Then we have a commutative diagram whose horizontal arrows are exact:
\[\xymatrix@M=7pt{
{} & {} &  & \Ker\psi \ar@{->}[d]& {}\\
{} & 0\ar@{->}[r]\ar@{->}[d] & (A|_U)^{\oplus n}\ar@{=}[r]\ar@{->>}[d]^-\varphi
& (A|_U)^{\oplus n}\ar@{->>}[d]^-\psi \ar@{->}[r] & 0\\
0\ar@{->}[r]  & M'|_U\ar@{->}[r]^-{\mu|_U}\ar@{->}[d] & M|_U\ar@{->}[r]^-{\nu|_U}\ar@{->}[d] 
& M''|_U\ar@{->}[d] \ar@{->}[r]& 0\\
{} & M'|_U & 0 & 0.
}\]
By using the snake lemma, we have an epimorphism $\Ker\psi\twoheadrightarrow M'|_U$.
Since $M''$ is locally finitely presented over $A$,
the kernel $\Ker\psi$ of $\psi$ is locally finitely generated over $A|_U$ by Sublemma \ref{sublem2} (2),
and hence $M'|_U$ is locally finitely generated over $A|_U$
by Lemma \ref{lem1} (1).
This implies that $M'$ is locally finitely generated over $A$.
%\medskip

(3)
Let $x\in X$.
Since $M'$ and $M''$ are locally finitely generated over $A$,
there exist an open neighborhood $U$ of $x$ and two positive integers $m, n$
and two epimorphisms
$\varphi\colon (A|_U)^{\oplus n}\twoheadrightarrow M',\ \psi\colon (A|_U)^{\oplus m}\twoheadrightarrow M''$.
By using Sublemma \ref{sublem2} (2), $\Ker \varphi$ and $\Ker\psi$ are locally finitely generated over $A|_U$.
Moreover, we have a commutative diagram whose vertical arrows and 
second and third horizontal arrows are exact:
\[\xymatrix{{} & 0 \ar@{->}[d] & 0 \ar@{->}[d] & 0 \ar@{->}[d] & {}\\
0 \ar@{->}[r] & \Ker\varphi\ar@{->}[d] \ar@{.>}[r] & \Ker\chi \ar@{->}[d] \ar@{.>}[r]
& \Ker\psi \ar@{->}[d] \ar@{->}[r] & 0\\
0 \ar@{->}[r] & (A|_U)^{\oplus n}\ar@{->}[d]^-\varphi \ar@{->}[r]
& (A|_U)^{\oplus (n+m)} \ar@{->}[d]^-\chi \ar@{->}[r]
& (A|_U)^{\oplus m} \ar@{->}[d]^-\psi \ar@{->}[r] & 0 \\
0 \ar@{->}[r] & M'|_U \ar@{->}[d] \ar@{->}[r]^-{\mu|_U}
& M|_U \ar@{->}[d] \ar@{->}[r]^-{\nu|_U} & M''|_U \ar@{->}[d] \ar@{->}[r] & 0\\
{} & 0 & 0 & 0& {} 
}\]
by using the same argument in the proof of Lemma \ref{lem1} (2).
By using the nine lemma, we have an exact sequence in $\I(A|_U)$:
$$0\to \Ker\varphi\to \Ker\chi\to \Ker\psi\to0.$$
Since $\Ker \varphi$ and $\Ker\psi$ are locally finitely generated over $A|_U$,
$\Ker\chi$ is locally finitely generated over $A|_U$ by Lemma \ref{lem1} (2).
Therefore, $M|_U$ is locally finitely presented over $A|_U$ by Sublemma \ref{sublem2} (1),
and hence $M$ is locally finitely presented over $A$.
\end{proof}

The notion of pseudo-coherent can be paraphrased as below.

\begin{lemma}\label{lem3}
Let $M$ be an ind-$A$-module.
\begin{itemize}
\item[\rm (1)]
$M$ is pseudo-coherent over $A$ 
if and only if
for any open subset $U$ of $X$, any positive integer $n$
and any morphism $\varphi\colon (A|_U)^{\oplus n}\to M|_U$ of ind-$A|_U$-modules,
the kernel $\Ker\varphi$ of $\varphi$ is locally finitely generated over $A|_U$.

\item[\rm(2)]
If $M$ is coherent over $A$, $M$ is locally finitely presented over $A$.

\item[\rm (3)]
We assume that $M$ is pseudo-coherent over $A$.
An ind-$A$-submodule $N$ is coherent over $A$
if and only if
it is locally finitely generated over $A$.
\end{itemize}
\end{lemma}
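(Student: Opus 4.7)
The plan is to deduce all three statements by assembling the finiteness lemmas already established (Lemmas \ref{lem1}, \ref{lem2} and Sublemma \ref{sublem2}); no new ind-theoretic construction is required.

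For part (1), I would prove the two directions separately. To prove ``$\Rightarrow$'', fix an open $U$, a positive integer $n$ and a morphism $\varphi\colon (A|_U)^{\oplus n}\to M|_U$, and factor it through $\Image\varphi$. From the exact sequence $0\to\Ker\varphi\to (A|_U)^{\oplus n}\to\Image\varphi\to 0$, Lemma \ref{lem1} (1) shows that $\Image\varphi$ is locally finitely generated over $A|_U$, and pseudo-coherence of $M$ on the open set $U$ then makes $\Image\varphi$ locally finitely presented. Applying Lemma \ref{lem2} (2) to the same exact sequence forces $\Ker\varphi$ to be locally finitely generated. Conversely, if the kernel condition holds and $N\hookrightarrow M|_U$ is a locally finitely generated ind-$A|_U$-submodule, then on some open $V\subset U$ one has an epimorphism $\varphi\colon (A|_V)^{\oplus n}\twoheadrightarrow N|_V$, and the composite $(A|_V)^{\oplus n}\to M|_V$ has the same kernel as $\varphi$; by hypothesis this kernel is locally finitely generated over $A|_V$, so Sublemma \ref{sublem2} (1) yields the local presentation of $N$.

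Part (2) is then immediate: if $M$ is coherent it is locally finitely generated, so pick a local epimorphism $\varphi\colon (A|_U)^{\oplus n}\twoheadrightarrow M|_U$, observe that $\Ker\varphi$ is locally finitely generated over $A|_U$ by part (1), and invoke Sublemma \ref{sublem2} (1) to conclude local finite presentation.

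For part (3), the ``only if'' direction is contained in the definition of coherence. For ``if'', assuming $N$ is locally finitely generated and $M$ pseudo-coherent, one only needs to verify pseudo-coherence of $N$: any locally finitely generated ind-$A|_U$-submodule $L$ of $N|_U$ sits inside $M|_U$ via the inclusion $N|_U\hookrightarrow M|_U$, and hence is locally finitely presented by pseudo-coherence of $M$. The only step deserving care is part (1) ``$\Rightarrow$'', where one must correctly distinguish between parts (1) and (2) of Lemma \ref{lem2} when applied to the factorization through the image, and make sure the image is genuinely a subobject in $\I(A|_U)$; beyond that, I do not anticipate a serious obstacle.
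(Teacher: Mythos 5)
Your proposal is correct and follows essentially the same route as the paper: part (1) via the exact sequence $0\to\Ker\varphi\to(A|_U)^{\oplus n}\to\Image\varphi\to0$ together with Lemma \ref{lem2} (2) for the forward direction and Sublemma \ref{sublem2} (1) for the converse, and part (2) exactly as you describe. The only (cosmetic) difference is in part (3), where you verify pseudo-coherence of $N$ directly from the definition by transitivity of subobjects, whereas the paper routes the argument through the kernel criterion of part (1); both are immediate and equivalent.
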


\begin{proof}
(1)
First we assume that $M$ is pseudo-coherent over $A$.
Let $U$ be an open subset of $X$, $n$ a positive integer
and $\varphi\colon (A|_U)^{\oplus n}\to M|_U$ a morphism of ind-$A|_U$-modules.
Then there exists an exact sequence in $\I(A|_U)$
$$0\to \Ker\varphi\to (A|_U)^{\oplus n}\to \Image\varphi\to0.$$
Since $M$ is pseudo-coherent over $A$ and 
$\Image\varphi$ is an ind-$A|_U$-submodule of $M|_U$ which is locally finitely generated over $A|_U$,
$\Image\varphi$ is locally finitely presented over $A|_U$.
So that $\Ker\varphi$ is locally finitely generated over $A|_U$
by Lemma \ref{lem2} (2).

On the other hand, we assume that for any open subset $V$ of $X$, any positive integer $n$
and any morphism $\varphi\colon (A|_V)^{\oplus n}\to M|_V$ of ind-$A|_V$-modules,
$\Ker\varphi$ is locally finitely generated over $A|_V$.
Let $U$ be an open subset of $X$ and $N$ an ind-$A|_U$-submodule
which is locally finitely generated over $A|_U$.
Then for any $z\in U$ there exist an open neighborhood $V\subset U$ of $z$, a positive integer $n$
and an epimorphism $\psi\colon (A|_V)^{\oplus n}\to N|_V$.
Since there exist isomorphisms
$$\Ker\psi := \Ker((A|_V)^{\oplus n}\to N|_V)
\simeq \Ker((A|_V)^{\oplus n}\to N|_V\hookrightarrow M|_V)
= \Ker((A|_V)^{\oplus n}\to M|_V)$$
and $\Ker((A|_V)^{\oplus n}\to M|_V)$ is locally finitely generated over $A|_V$
by using the assumption,
$\Ker\psi$ is locally finitely generated over $A|_V$.
Therefore, $N$ is locally finitely presented over $A|_U$
by Sublemma \ref{sublem2} (1)
and hence $M$ is pseudo-coherent over $A$.
%\medskip

(2)
We assume that $M$ is coherent over $A$.
Since $M$ is locally finitely generated over $A$,
for any $x\in X$ there exist an open neighborhood $U$ of $x$,
a positive integer $n$ and an epimorphism
$\varphi\colon (A|_U)^{\oplus n}\twoheadrightarrow M|_U$
of ind-$A|_U$-modules.
Since $M$ is pseudo-coherent over $A$,
the kernel $\Ker\varphi$ of $\varphi$ is locally finitely generated over $A|_U$
by using (1).
Hence,
$M$ is locally finitely presented over $A$
by Sublemma \ref{sublem2} (1).
%\medskip

(3)
We assume that $M$ is pseudo-coherent over $A$.
Let $N$ be an ind-$A$-submodule of $M$.
It is enough to prove that if $N$ is locally finitely generated over $A$ it is pseudo-coherent over $A$.

Let $U$ be an open subset of $X$, $n$ a positive integer and 
$\varphi\colon(A|_U)^{\oplus n}\to N|_U$ a morphism of ind-$A|_U$-modules.
Note that there exist isomorphisms
\begin{align*}
\Ker\varphi := \Ker((A|_U)^{\oplus n} \to N|_U)
\simeq\Ker((A|_U)^{\oplus n} \to N|_U\hookrightarrow M|_U)
\simeq\Ker((A|_U)^{\oplus n} \to M|_U).
\end{align*}
By using (1) and the assumption that $M$ is pseudo-coherent over $A$,
$\Ker((A|_U)^{\oplus n} \to M|_U)$ is locally finitely generated over $A|_U$,
and hence $\Ker\varphi$ is locally finitely generated over $A|_U$.
Therefore, $N$ is pseudo-coherent over $A$ by using (1).
\end{proof}

Note that the property of coherent is local property by Lemma \ref{lem3} (1).
Namely, if an ind-$A$-module $M$ is coherent over $A$, 
then $A|_U$ is also coherent over $A|_U$ for any open subset $U$ of $X$.
Moreover, one can show that the category $\Icoh A$ is abelian
by using the following proposition.

\begin{proposition}\label{prop1}
Let us consider an exact sequence in $\I A$:
$$0\to M'\xrightarrow{\ \mu\ } M\xrightarrow{\ \nu\ } M''\to 0.$$ 
\begin{itemize}
\item[\rm (1)]
If $M$ is coherent over $A$ and $M''$ is locally finitely presented,
then $M'$ is coherent over $A$.

\item[\rm (2)]
$M'$ and $M''$ are coherent over $A$, then $M$ is also coherent over $A$.

\item[\rm (3)]
If $M$ is coherent over $A$ and $M'$ is locally finitely generated over $A$,
then $M''$ is coherent over $A$.
\end{itemize}
\end{proposition}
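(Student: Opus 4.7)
The plan is to reduce each statement to the pseudo-coherence characterization in Lemma \ref{lem3}(1) and the local-finiteness stability properties from Lemmas \ref{lem1} and \ref{lem2}, using the lifting property Lemma \ref{lem0}(2) whenever we need to move from $M''$ back to $M$.

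For (1), the observation is that $M'$ embeds in $M$ as an ind-$A$-submodule via $\mu$, and $M$ is pseudo-coherent by assumption. By Lemma \ref{lem3}(3) it therefore suffices to show that $M'$ is locally finitely generated over $A$. But $M$ is coherent, hence locally finitely presented by Lemma \ref{lem3}(2), and $M''$ is locally finitely presented by hypothesis; so Lemma \ref{lem2}(2) delivers that $M'$ is locally finitely generated. This part is essentially formal.

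For (2), first apply Lemma \ref{lem3}(2) to both $M'$ and $M''$ and then Lemma \ref{lem2}(3) to conclude that $M$ is locally finitely presented, in particular locally finitely generated. To verify pseudo-coherence I will use Lemma \ref{lem3}(1): given an open $U\subset X$ and a morphism $\varphi\colon (A|_U)^{\oplus n}\to M|_U$, set $\psi:=\nu|_U\circ\varphi$. Then $\Ker\psi$ is locally finitely generated over $A|_U$ because $M''$ is pseudo-coherent. Since $\varphi$ restricted to $\Ker\psi$ factors through $\Image\mu|_U\simeq M'|_U$, we obtain an induced morphism $q\colon\Ker\psi\to M'|_U$ with $\Ker q=\Ker\varphi$. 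Locally on some $V\subset U$ choose an epimorphism $p\colon (A|_V)^{\oplus k}\twoheadrightarrow(\Ker\psi)|_V$; applying Lemma \ref{lem3}(1) to the pseudo-coherent $M'$ and the composed morphism $q|_V\circ p\colon (A|_V)^{\oplus k}\to M'|_V$ shows that $\Ker(q|_V\circ p)$ is locally finitely generated over $A|_V$. Since this kernel surjects onto $\Ker q|_V=\Ker\varphi|_V$, Lemma \ref{lem1}(1) finishes the pseudo-coherence check.

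Part (3) is the main obstacle, because we no longer have pseudo-coherence of $M'$ available, only that $M'$ is locally finitely generated. By Lemma \ref{lem1}(1), $M''$ is locally finitely generated over $A$. To prove pseudo-coherence I use Lemma \ref{lem3}(1) once more: given $\varphi\colon (A|_U)^{\oplus n}\to M''|_U$, fix $x\in U$. Since $\nu|_U$ is an epimorphism, Lemma \ref{lem0}(2) yields an open neighborhood $V\subset U$ of $x$ and a lift $\tilde\varphi\colon (A|_V)^{\oplus n}\to M|_V$ with $\nu|_V\circ\tilde\varphi=\varphi|_V$. Shrinking $V$ using that $M'$ is locally finitely generated, we also fix an epimorphism $\sigma\colon(A|_V)^{\oplus m}\twoheadrightarrow M'|_V$. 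Combining $\mu|_V\circ\sigma$ with $\tilde\varphi$ via the universal property of the coproduct gives $\Phi\colon (A|_V)^{\oplus(m+n)}\to M|_V$ such that $\nu|_V\circ\Phi=\varphi|_V\circ p$, where $p\colon(A|_V)^{\oplus(m+n)}\twoheadrightarrow(A|_V)^{\oplus n}$ is the projection. Since $M$ is pseudo-coherent, $\Ker\Phi$ is locally finitely generated over $A|_V$ by Lemma \ref{lem3}(1). A short snake-lemma diagram chase applied to
\[
\xymatrix@R=20pt@C=25pt@M=4pt{
0\ar[r] & \Ker\Phi\ar[r]\ar[d] & (A|_V)^{\oplus(m+n)}\ar[r]^-{\Phi}\ar[d]^-{p} & \Image\Phi\ar[r]\ar[d]^-{\nu|_V} & 0\\
0\ar[r] & \Ker(\varphi|_V)\ar[r] & (A|_V)^{\oplus n}\ar[r]^-{\varphi|_V} & \Image\varphi|_V\ar[r] & 0
}
\]
shows that the cokernel of $\Ker\Phi\to\Ker(\varphi|_V)$ is isomorphic to the kernel of the right vertical map $\Image\Phi\twoheadrightarrow\Image\varphi|_V$, which equals $\Image\Phi\cap M'|_V=M'|_V$ because $\Image\Phi$ already contains $\mu|_V(M'|_V)$. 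Thus we obtain an exact sequence
\[
0\longrightarrow\Ker\Phi\longrightarrow\Ker(\varphi|_V)\longrightarrow M'|_V\longrightarrow 0,
\]
and since both end terms are locally finitely generated over $A|_V$, Lemma \ref{lem1}(2) shows that $\Ker(\varphi|_V)$ is too. As this holds on a neighborhood of every point of $U$, $\Ker\varphi$ is locally finitely generated over $A|_U$, completing the pseudo-coherence verification and hence the coherence of $M''$.
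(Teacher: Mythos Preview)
Parts (1) and (2) are correct and follow essentially the paper's route; invoking Lemma~\ref{lem3}(3) in (1) is a clean shortcut for what the paper writes out by hand.

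In part (3) your overall strategy matches the paper's: lift $\varphi$ through $\nu$, combine with an epimorphism onto $M'$, and invoke pseudo-coherence of $M$. However, your snake-lemma step is wrong. From your diagram the snake sequence is
\[
0\longrightarrow \Ker a \longrightarrow \Ker p \longrightarrow \Ker c \longrightarrow \Coker a \longrightarrow \Coker p \longrightarrow \Coker c \longrightarrow 0,
\]
where $a\colon\Ker\Phi\to\Ker(\varphi|_V)$ is the left vertical and $c$ the right vertical. Since $\Coker p=0$, this only gives $\Coker a$ as a \emph{quotient} of $\Ker c$, not an isomorphic copy. In fact the map $\Ker p=(A|_V)^{\oplus m}\to\Ker c\simeq M'|_V$ in the snake sequence is exactly $\sigma$, which is surjective; hence $\Coker a=0$, and your claimed short exact sequence $0\to\Ker\Phi\to\Ker(\varphi|_V)\to M'|_V\to0$ is false in general (indeed $a$ has kernel $\Ker\sigma$, typically nonzero). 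The saving grace is that the \emph{correct} conclusion, namely that $\Ker\Phi\twoheadrightarrow\Ker(\varphi|_V)$ is surjective, already finishes the job via Lemma~\ref{lem1}(1). The paper extracts this surjection more transparently by applying the snake lemma to
\[
\xymatrix@R=18pt@C=25pt{
0\ar[r] & (A|_V)^{\oplus m}\ar[r]\ar@{->>}[d]^-{\sigma} & (A|_V)^{\oplus(m+n)}\ar[r]\ar[d]^-{\Phi} & (A|_V)^{\oplus n}\ar[r]\ar[d]^-{\varphi|_V} & 0\\
0\ar[r] & M'|_V\ar[r] & M|_V\ar[r] & M''|_V\ar[r] & 0,
}
\]
which immediately yields $\Ker\Phi\to\Ker(\varphi|_V)\to\Coker\sigma=0$.
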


\begin{proof}
(1)
Since $M$ is locally finitely generated over $A$ and $M''$ is locally finitely presented over $A$,
$M'$ is locally finitely generated over $A$ by Lemma \ref{lem2} (2).
Let us prove that $M'$ is pseudo-coherent over $A$.

Let $U$ be an open subset of $X$, $n$ a positive integer
and morphism $\varphi\colon (A|_U)^{\oplus n} \to M'|_U$
of ind-$A|_U$-modules.
Then there exist isomorphisms
\begin{align*}
\Ker\varphi &:= \Ker((A|_U)^{\oplus n} \to M'|_U)\\
&\simeq\Ker((A|_U)^{\oplus n} \to M'|_U\hookrightarrow M|_U)\\
&\simeq\Ker((A|_U)^{\oplus n} \to M|_U).
\end{align*}
Since $M$ is pseudo-coherent over $A$,
$\Ker((A|_U)^{\oplus n} \to M|_U)$ is locally finitely generated over $A|_U$
by Lemma \ref{lem3} (1)
and hence $\Ker\varphi$ is locally finitely generated over $A|_U$.
Therefore, $M'$ is pseudo-coherent over $A$ by Lemma \ref{lem3} (1).
%\medskip

(2)
Since $M'$ and $M''$ are locally finitely generated over $A$,
$M$ is also locally finitely generated over $A$ by Lemma \ref{lem1} (2).
Let us prove that $M$ is pseudo-coherent over $A$.

Let $U$ be an open subset of $X$, $n$ a positive integer
and morphism $\varphi\colon (A|_U)^{\oplus n} \to M|_U$
of ind-$A|_U$-modules.
Let us set $\psi := \nu|_U\circ\varphi\colon (A|_U)^{\oplus n}\twoheadrightarrow M|_U\twoheadrightarrow M''|_U$.
Then we have a commutative diagram whose horizontal arrows are exact:
\[\xymatrix@M=7pt{
{} & {} & \Ker\varphi \ar@{->}[d]\ar@{->}[r] & \Ker\psi \ar@{->}[d]& {}\\
{} & 0\ar@{->}[r]\ar@{->}[d] & (A|_U)^{\oplus n}\ar@{=}[r]\ar@{->>}[d]^-\varphi
& (A|_U)^{\oplus n}\ar@{->>}[d]^-\psi \ar@{->}[r] & 0\\
0\ar@{->}[r]  & M'|_U\ar@{->}[r]^-{\mu|_U}\ar@{->}[d] & M|_U\ar@{->}[r]^-{\nu|_U}\ar@{->}[d] 
& M''|_U\ar@{->}[d] \ar@{->}[r]& 0\\
{} & M'|_U & 0 & 0.
}\]
By using the snake lemma,
we have an exact sequence in $\I(A|_U)$:
$$0\to \Ker\varphi\to\Ker\psi \to M'|_U \to 0.$$
%and hence we have an exact sequence in $\I(A|_U)$:
%$$0\to \Ker\varphi\to\Ker\psi\to \Image\lambda \to 0.$$
Since $M''$ is pseudo-coherent over $A$,
$\Ker\psi$ is locally finitely generated over $A|_U$
by Lemma \ref{lem3} (1).
%and hence $\Image\lambda$ is locally finitely generated over $A|_U$
%by Lemma \ref{lem1} (1).
Moreover, 
since $M'|_U$ is coherent over $A|_U$,
$\Ker\varphi$ is locally finitely generated over $A|_U$
by Lemma \ref{lem2} (2) and Lemma \ref{lem3} (2).
Therefore, $M$ is pseudo-coherent over $A$ by Lemma \ref{lem3} (1),
and hence $M$ is coherent over $A$.
%\medskip

(3)
By using  Lemma \ref{lem2} (1) and Lemma \ref{lem3} (2),
$M''$ is locally finitely presented over $A$.
In particular, $M''$ is locally finitely generated over $A$.
Let us prove that $M''$ is pseudo-coherent over $A$.

Let $U$ be an open subset of $X$, $n$ a positive integer
and morphism $\varphi\colon (A|_U)^{\oplus n} \to M''|_U$
of ind-$A|_U$-modules.
Since $M'$ is locally finitely generated over $A$,
there exist an open neighborhood $V\subset U$ of $x$, a positive integer $m$
and an epimorphism $\psi\colon (A|_V)^{\oplus m}\to M'|_V$.
Hence, by the same argument in the proof of Lemma \ref{lem1} (2)
we have a commutative diagram whose horizontal arrows are exact:
\[\xymatrix{0 \ar@{->}[r] & (A|_V)^{\oplus m} \ar@{->}[r] \ar@{->>}[d]^-\psi & (A|_V)^{\oplus (m+n)}
\ar@{->}[r] \ar@{.>}[d]^-{^\exists\chi} & (A|_V)^{\oplus n} \ar@{->}[r] \ar@{->}[d]^-{\varphi|_V} & 0\\
0 \ar@{->}[r] & M'|_V \ar@{->}[r] & M|_V \ar@{->}[r] & M''|_V \ar@{->}[r] & 0
}\]
and hence we have an exact sequence in $\I(A|_V)$:
$$\Ker\chi\to \Ker\varphi\to \Coker\psi\simeq0$$
 by using the snake lemma.
Since $M|_V$ is pseudo-coherent over $A|_V$,
the kernel $\Ker\chi$ of $\chi$ is locally finitely generated over $A|_V$
and hence the kernel $\Ker\varphi|_V$ of $\varphi$ is locally finitely generated over $A|_V$
by Lemma \ref{lem1} (1).
This implies that $\Ker\varphi$ is locally finitely generated over $A|_U$.
Therefore, $M''$ is pseudo-coherent over $A$
by Lemma \ref{lem3} (1).
\end{proof}

\begin{corollary}\label{cor-abel}
The category $\Icoh A$ of coherent ind-$A$-modules is abelian.
\end{corollary}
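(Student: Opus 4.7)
The plan is to verify the standard criterion that promotes a full subcategory of an abelian category to an abelian one: since $\I A$ is known to be abelian, it suffices to check that $\Icoh A$ contains a zero object, is closed under finite coproducts in $\I A$, and is closed under the formation of kernels and cokernels of morphisms between its objects. The coimage/image isomorphism will then be inherited from $\I A$.

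The zero ind-$A$-module is trivially coherent, and closure under finite coproducts reduces by iteration to the binary case: given $M', M'' \in \Icoh A$, one applies Proposition \ref{prop1} (2) to the split short exact sequence $0 \to M' \to M' \oplus M'' \to M'' \to 0$ in $\I A$ to conclude that $M' \oplus M''$ is coherent over $A$.

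The substantive step is closure under kernels and cokernels. Let $\varphi\colon M \to N$ be a morphism in $\Icoh A$. I would first analyze $\Image\varphi$: as a quotient of $M$, it is locally finitely generated over $A$ by Lemma \ref{lem1} (1); as an ind-$A$-submodule of the pseudo-coherent module $N$ that is locally finitely generated, Lemma \ref{lem3} (3) yields that $\Image\varphi$ is coherent over $A$, and then Lemma \ref{lem3} (2) promotes this to locally finitely presented. Applying Proposition \ref{prop1} (3) to the short exact sequence
$$0 \to \Image\varphi \to N \to \Coker\varphi \to 0$$
produces the coherence of $\Coker\varphi$, while applying Proposition \ref{prop1} (1) to
$$0 \to \Ker\varphi \to M \to \Image\varphi \to 0$$
produces the coherence of $\Ker\varphi$ (here we use both that $M$ is coherent and that $\Image\varphi$ is locally finitely presented).

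I do not expect a genuine obstacle: the technical content — the three two-out-of-three statements for coherence and the equivalent formulations of pseudo-coherence and coherence — has already been packaged into Proposition \ref{prop1} and Lemma \ref{lem3}. The corollary amounts to wiring these ingredients together in the right order, the only subtlety being to resist the temptation to apply Proposition \ref{prop1} (1) directly to $\Ker\varphi$ before first establishing that $\Image\varphi$ is locally finitely presented, which is precisely where Lemma \ref{lem3} (2) is invoked.
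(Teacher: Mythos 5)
Your proposal is correct and follows essentially the same route as the paper: reduce to closure under finite coproducts, kernels, and cokernels, then invoke Proposition \ref{prop1} (2), (3) and (1) respectively, with Lemma \ref{lem3} (2)--(3) and Lemma \ref{lem1} (1) supplying the needed properties of $\Image\varphi$. You in fact spell out the treatment of $\Image\varphi$ more explicitly than the paper's proof, which only cites Proposition \ref{prop1} (1) and Lemma \ref{lem3} (2) for that step.
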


\begin{proof}
It is enough to prove that 
for any two coherent ind-$A$-modules $M, N$,
the coproduct $M\oplus N$ of $M$ and $N$ is also coherent over $A$
and 
for any morphism $\varphi\colon M\to N$ of coherent ind-$A$-modules
the kernel $\Ker\varphi$ and cokernel $\Coker\varphi$ of $\varphi$ are also coherent over $A$.
The first one follows from Proposition \ref{prop1} (2).
The second one follows from Proposition \ref{prop1} (3).
The third one follows from Proposition \ref{prop1} (1) and Lemma \ref{lem3} (2).
\end{proof}

\begin{proposition}
Let $M$ be an ind-$A$-module.
We assume that $A$ is a coherent over $A$.
Then $M$ is coherent over $A$ if and only if $A$ is locally finitely presented over $A$.
\end{proposition}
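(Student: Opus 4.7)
The statement as written reads \emph{``$M$ is coherent over $A$ iff $A$ is locally finitely presented over $A$,''} but the right-hand condition holds trivially (from the exact sequence $0\to A\to A\to 0$), which would force every ind-$A$-module to be coherent. The intended statement is surely ``$M$ is coherent over $A$ if and only if $M$ is locally finitely presented over $A$'', and I will plan for that version.

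The forward direction is already contained in Lemma~\ref{lem3}~(2) and needs no hypothesis on $A$. For the converse, assume $M$ is locally finitely presented and that $A$ is coherent over $A$. Fix $x\in X$ and, using the definition, pick an open neighborhood $U$ of $x$, positive integers $m,n$, and an exact sequence
$$(A|_U)^{\oplus m}\xrightarrow{\ \psi\ } (A|_U)^{\oplus n}\longrightarrow M|_U\longrightarrow 0$$
in $\I(A|_U)$. The first step is to use the remark following Lemma~\ref{lem3} to conclude that $A|_U$ is coherent over $A|_U$.

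Next I would show, by induction on $k\ge 1$, that $(A|_U)^{\oplus k}$ is coherent over $A|_U$: the short exact sequence
$$0\longrightarrow A|_U\longrightarrow (A|_U)^{\oplus k}\longrightarrow (A|_U)^{\oplus(k-1)}\longrightarrow 0$$
together with Proposition~\ref{prop1}~(2) advances the induction. In particular $(A|_U)^{\oplus m}$ and $(A|_U)^{\oplus n}$ are coherent over $A|_U$. Factor $\psi$ through its image to obtain the short exact sequence
$$0\longrightarrow \Image\psi\longrightarrow (A|_U)^{\oplus n}\longrightarrow M|_U\longrightarrow 0.$$
Since $\Image\psi$ is a quotient of $(A|_U)^{\oplus m}$, it is locally finitely generated over $A|_U$, so Proposition~\ref{prop1}~(3) gives that $M|_U$ is coherent over $A|_U$.

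Finally, I would note that coherence is a local property on $X$: locally finitely generated is defined pointwise, and pseudo-coherence (by Lemma~\ref{lem3}~(1)) is the condition that for every open $V\subset X$ and every morphism $(A|_V)^{\oplus n}\to M|_V$ the kernel is locally finitely generated, a condition that descends from any open cover. Since $M|_U$ is coherent for $U$ ranging over an open cover of $X$, we conclude that $M$ is coherent over $A$. The argument is mechanical; the only step requiring care is the induction showing that finite direct sums $(A|_U)^{\oplus k}$ inherit coherence, which is precisely where the hypothesis that $A$ is coherent over $A$ enters.
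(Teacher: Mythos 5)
You correctly diagnose the typo (the conclusion should read ``$M$ is locally finitely presented over $A$''), and your proof of the nontrivial direction is correct but follows a genuinely different route from the paper's. The paper verifies pseudo-coherence of $M$ head-on via the kernel criterion of Lemma~\ref{lem3}~(1): it takes an arbitrary test morphism $\varphi\colon (A|_U)^{\oplus n}\to M|_U$, lifts it locally through a presentation of $M$ using Lemma~\ref{lem0}~(2), and then runs a snake-lemma argument producing the four-term sequence $0\to \Ker\widetilde{\varphi}\to\Ker\varphi\to\Ker\psi\to\Coker\widetilde{\varphi}$, from which local finite generation of $\Ker\varphi$ is extracted via Lemmas~\ref{lem1}, \ref{lem2} and \ref{lem3}~(3). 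You instead apply Proposition~\ref{prop1}~(3) directly to the short exact sequence $0\to\Image\psi\to(A|_U)^{\oplus n}\to M|_U\to 0$ coming from the presentation, which immediately gives coherence of $M|_U$, and then glue over an open cover. Your route is shorter and avoids the lifting lemma entirely; both arguments rest on the same essential input, namely that $(A|_U)^{\oplus k}$ is coherent when $A$ is (your induction via Proposition~\ref{prop1}~(2), the paper's appeal to Corollary~\ref{cor-abel}). The one step you should not wave at is the final globalization: the paper only records the restriction direction of locality (coherent on $X$ implies coherent on opens), so the converse --- that coherence descends from an open cover --- needs the one-line verification you sketch, namely that local finite generation is manifestly local and that the kernel condition in Lemma~\ref{lem3}~(1) can be checked on the sets $V\cap U$ because restriction is exact. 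With that spelled out, your proof is complete.
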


%\newpage
\begin{proof}
By Lemma \ref{lem3} (2),
it is enough to prove that if $M$ is locally finitely presented over $A$
then $M$ is coherent over $A$.

Let $M$ is locally finitely presented over $A$.
It is enough to show that $M$ is pseudo-coherent over $A$.
We shall prove it by using Lemma \ref{lem3} (1).

Let $U$ be an open subset of $X$, $n$ a positive integer
and $\varphi\colon (A|_U)^{\oplus n}\to M|_U$ a morphism of ind-$A|_U$-modules.
We shall prove that the kernel $\Ker\varphi$ of $\varphi$ is locally finitely generated over $A|_U$.
Let $x\in U$.
Since $M$ is locally finitely presented over $A$,
there exist an open neighborhood $V\subset U$ of $x$, a positive integer $m$
and an epimorphism $\psi\colon (A|_V)^{\oplus m}\twoheadrightarrow M|_V$ of ind-$A|_V$-modules
whose kernel is locally finitely generated over $A|_V$
by Sublemma \ref{sublem2} (2)
and hence we have an exact sequence in $\I(A|_V)$:
$$0\to \Ker\psi\longrightarrow (A|_V)^{\oplus m}\xrightarrow{\ \psi\ }M|_V\to 0.$$
By Lemma \ref{lem0} (2),
there exists an open neighborhood $W$ of $x$
and a morphism $\widetilde{\varphi|_W}\colon (A|_W)^{\oplus n}\to (A|_W)^{\oplus m}$
such that $\psi|_W\circ \widetilde{\varphi|_W} = \varphi|_W$.
Therefore we have a commutative diagram whose horizontal arrows are exact:
\[\xymatrix@M=7pt{
{} & 0\ar@{->}[r]\ar@{->}[d] & (A|_W)^{\oplus n}\ar@{=}[r]\ar@{->}[d]^-{\widetilde{\varphi|_W}}
& (A|_W)^{\oplus n}\ar@{->}[d]^-{\varphi|_W} \ar@{->}[r] & 0\\
0\ar@{->}[r]  & \Ker\psi|_W\ar@{->}[r] & (A|_W)^{\oplus m}\ar@{->}[r]_-{\psi|_W}
&  M|_W\ar@{->}[r]& 0,
}\]
and hence we have an exact sequence in $\I(A|_W)$:
$$0\to \Ker\widetilde{\varphi|_W}\longrightarrow\Ker\varphi|_W\xrightarrow{\ \nu\ }
\Ker\psi|_W\xrightarrow{\ \mu\ } \Coker\widetilde{\varphi|_W}.$$
by the snake lemma.
Since $A|_W$ is coherent over $A|_W$,
$\Ker\widetilde{\varphi|_W}$ and $\Coker\widetilde{\varphi|_W}$
are also coherent over $A|_W$ by Corollary \ref{cor-abel} (see also Proposition \ref{prop1}).
Since $\Ker\psi|_W$ is locally finitely generated over $A|_W$
and there exists an epimorphism $\Ker\psi|_W\twoheadrightarrow \Image\mu$
of ind-$A|_W$-modules,
$\Image\mu$ is locally finitely generated over $A|_W$
by Lemma \ref{lem1} (1).
Hence $\Image\mu$ is coherent over $A|_W$
by Lemma \ref{lem3} (3).
In particular, $\Image\mu$ is locally finitely presented over $A|_W$
by Lemma \ref{lem3} (2).
Since there exist an exact sequence in $\I(A|_W)$:
$$0\to \Ker\mu\to \Ker\psi|_W\to \Image\mu\to0$$
and $\Ker\psi|_W$ is locally finitely generated over $A|_V$,
$\Ker\mu$ is locally finitely generated over $A|_V$
by Lemma \ref{lem2} (2).
Since there exists an exact sequence in $\I(A|_W)$
$$0\to \Ker\widetilde{\varphi|_W}\to\Ker\varphi|_W\to\Ker\mu\to0,$$
$\Ker\varphi|_W$ is locally finitely generated over $A|_V$
by Lemma \ref{lem1} (2).
This implies that $\Ker\varphi$ is locally finitely generated over $A|_U$.
\end{proof}

%Recall that there exist functors:
%\begin{align*}
%\alpha_X&\colon \I(\beta_X\SA)\to \Mod(\SA)\\
%\beta_X&\colon \Mod(\SA)\to \I(\beta_X\SA)
%\end{align*}
%for a sheaf of $\Bbbk_X$-algebras $\SA$.
%\newpage

\begin{proposition}
Let $\SA$ be a sheaf of $\Bbbk_X$-algebras,
$M$ an ind-$\beta_X\SA$-modul
and $\M$ an $\SA$-module.
\begin{itemize}
\item[\rm(1)]
\begin{itemize}
\setlength{\itemsep}{2pt}
\item[\rm(i)]
If $M$ is locally finitely generated over $\beta_X\SA$,
then $\alpha_XM$ is locally finitely generated over $\SA$.

\item[\rm (ii)]
If $M$ is locally finitely presented over $\beta_X\SA$,
then $\alpha_XM$ is locally finitely presented over $\SA$.

\item[\rm(iii)]
If $M$ is pseudo-coherent over $\beta_X\SA$,
then $\alpha_XM$ is pseudo-coherent over $\SA$.

\item[\rm (iv)]
If $M$ is coherent over $\beta_X\SA$,
then $\alpha_XM$ is coherent over $\SA$.
\end{itemize}

\item[\rm (2)]
\begin{itemize}
\setlength{\itemsep}{-1pt}
\item[\rm(i)]
If $\M$ is locally finitely generated over $\SA$,
then $\beta_X\M$ is locally finitely generated over $\beta_X\SA$.

\item[\rm (ii)]
If $\M$ is locally finitely presented over $\SA$,
then $\beta_X\M$ is locally finitely presented over $\beta_X\SA$.

\item[\rm(iii)]
If $\M$ is pseudo-coherent over $\SA$,
then $\beta_X\M$ is pseudo-coherent over $\beta_X\SA$.

\item[\rm (iv)]
If $\M$ is coherent over $\SA$,
then $\beta_X\M$ is coherent over $\beta_X\SA$.
\end{itemize}
\end{itemize}
\end{proposition}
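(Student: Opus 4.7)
My plan rests on four features of $\alpha_X$ and $\beta_X$: both are exact; both commute with finite coproducts and with restriction $(-)|_U$ to open subsets; they form an adjoint pair $\beta_X \dashv \alpha_X$ compatible with the algebra actions; and, since $\beta_X$ is fully faithful, the unit $\M \simto \alpha_X\beta_X\M$ is an isomorphism. Combined with the natural identifications $(\beta_X\M)|_U \simeq \beta_X(\M|_U)$, $(\alpha_X M)|_U \simeq \alpha_X(M|_U)$, and $(\beta_X\SA)^{\oplus n} \simeq \beta_X(\SA^{\oplus n})$, these reduce (i) and (ii) of both (1) and (2) to applying the exact functor $\alpha_X$ (resp.\ $\beta_X$) termwise to the defining exact sequence; there is essentially no content to check beyond this.

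For (2)(iii), I fix an open $U \subseteq X$ and a locally finitely generated ind-$\beta_X\SA|_U$-submodule $N \hookrightarrow (\beta_X\M)|_U$. Locally on some $V \subseteq U$ I pick an epimorphism $(\beta_X\SA|_V)^{\oplus n} \twoheadrightarrow N|_V$ and compose it with the inclusion to obtain $\phi\colon (\beta_X\SA|_V)^{\oplus n} \to (\beta_X\M)|_V$. Transporting $\phi$ across the adjunction and using $\alpha_X\beta_X\M \simeq \M$ produces an $\SA|_V$-linear morphism $\psi\colon (\SA|_V)^{\oplus n} \to \M|_V$; let $\calI \subseteq \M|_V$ be its image. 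Then $\calI$ is locally finitely generated over $\SA|_V$, hence by pseudo-coherence of $\M$ locally finitely presented; by (2)(ii) already established, $\beta_X\calI$ is locally finitely presented over $\beta_X\SA|_V$. Finally, since $\phi$ is recovered from $\psi$ by applying $\beta_X$ and $\beta_X$ preserves images, one identifies $N|_V = \Image \phi = \beta_X\calI$, which gives the required local finite presentation of $N$.

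For (1)(iii) the argument is dual: given a locally finitely generated ind-$\SA|_U$-submodule $\N \hookrightarrow (\alpha_X M)|_U$, I pick locally on $V$ an epimorphism $(\SA|_V)^{\oplus n} \twoheadrightarrow \N|_V$ composed with the inclusion, pass across the adjunction to a morphism $(\beta_X\SA|_V)^{\oplus n} \to M|_V$, and let $N'$ be its image. Pseudo-coherence of $M$ makes $N'$ locally finitely presented, and (1)(ii) transports this to $\alpha_X N'$; exactness of $\alpha_X$ then identifies $\alpha_X N' \simeq \N|_V$. Parts (1)(iv) and (2)(iv) follow at once from (i) and (iii) together with the definition of coherent.

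The step I expect to require the most care, and the only genuine technical obstacle, is the image identification $N|_V \simeq \beta_X\calI$ (and its dual $\alpha_X N' \simeq \N|_V$): this is where one really uses that the adjunction $\beta_X \dashv \alpha_X$ is compatible with the $\beta_X\SA$- and $\SA$-module structures and that restriction $(-)|_V$ commutes with both $\alpha_X$ and $\beta_X$. The verification is purely formal but must be spelled out, since the image of a morphism of ind-modules must be computed in the ambient abelian category $\I(\beta_X\SA|_V)$ rather than at the level of underlying sheaves.
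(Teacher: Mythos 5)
Your proposal is correct and takes essentially the same route as the paper: exactness of $\alpha_X$ and $\beta_X$ together with their compatibility with finite coproducts and restriction disposes of (i), (ii), (iv), while the adjunction $\beta_X\dashv\alpha_X$ and full faithfulness of $\beta_X$ transport test morphisms from free modules for (iii). The only difference is one of packaging: the paper uses the kernel characterization of pseudo-coherence (Lemma \ref{lem3} (1)) and identifies $\Ker\Phi\simeq\beta_U(\Ker\varphi)$ (resp.\ $\Ker\varphi\simeq\alpha_U\Ker\Phi$), whereas you work with the submodule definition and identify images ($N|_V\simeq\beta_X\calI$, $\N|_V\simeq\alpha_XN'$) -- the same exactness-plus-adjunction step you correctly single out as the crux.
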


\begin{proof}
(1)(i)
We assume that $M$ is locally finitely generated over $\beta_X\SA$.
Let $x\in X$.
Then there exist an open neighborhood $U$ of $x$,
a positive integer $n$
and epimorphism $\varphi\colon ((\beta_X\SA)|_U)^{\oplus n}\twoheadrightarrow M|_U$
of ind-$(\beta_X\SA)|_U$-modules.
Since the functor $\alpha_U$ is exact
and there exist isomorphisms
$\alpha_U((\beta_X\SA)|_U) \simeq \SA|_U,\
\alpha_U(M|_U) \simeq (\alpha_XM)|_U$,
we have an epimorphism $\alpha|_U(\varphi)\colon (\SA|_U)^{\oplus n}\twoheadrightarrow (\alpha_XM)|_U$.
%This implies that 
Hence, $\alpha_XM$ is locally finitely generated over $\SA$.
%%\medskip

(1)(ii)
We assume that $M$ is locally finitely presented over $\beta_X\SA$.
Let $x\in X$.
Then there exist an open neighborhood $U$ of $x$,
positive integers $m, n$
and an exact sequence
$$((\beta_X\SA)|_U)^{\oplus m}\to ((\beta_X\SA)|_U)^{\oplus n}\to M|_U\to 0$$
in $\I((\beta_X\SA)|_U)$.
Since the functor $\alpha_U$ is exact
and there exist isomorphisms
$\alpha_U((\beta_X\SA)|_U) \simeq \SA|_U,\
\alpha_U(M|_U) \simeq (\alpha_XM)|_U$,
we have an exact sequence in $\Mod(\SA|_U)$:
$$(\SA|_U)^{\oplus m}\to (\SA|_U)^{\oplus n}\to (\alpha_XM)|_U\to 0.$$
This implies that $\alpha_XM$ is locally finitely presented over $\SA$.
%%\medskip

(1)(iii).
We assume that $M$ is pseud-coherent over $\beta_X\SA$.
Let $U$ be an open subset of $X$,
$n$ a positive integer
and $\varphi\colon (\SA|_U)^{\oplus n}\to (\alpha_XM)|_U$ a morphism of $\SA|_U$-modules.
Since the functor $\alpha_X$ is exact and there exists an isomorphism $\alpha_X\circ \beta_X\simeq \id$,
we have isomorphisms
$$\Ker\varphi\simeq \Ker((\alpha_X\beta_X\SA)|_U)^{\oplus n}\to (\alpha_XM)|_U
\simeq \alpha_U\Ker((\beta_X\SA)|_U)^{\oplus n}\to M|_U).$$
By Lemma \ref{lem3} (1),
$\Ker((\beta_X\SA)|_U)^{\oplus n}\to M|_U)$ is locally finitely generated over $\beta_X\SA$
and hence $\alpha_X\Ker((\beta_X\SA)|_U)^{\oplus n}\to M|_U)$ is locally finitely generated over $\SA$
by using (1)(i).
Therefore, $\Ker\varphi$ is locally finitely generated over $\SA|_U$.
By Lemma \ref{lem3} (1) of the case of the sheaves with ring actions which is well-known,
$\alpha_XM$ is pseudo-coherent over $\SA$.
%%\medskip

(1)(iv) follows from (1)(i) and (1)(iii).

%\newpage
%%\medskip
%\medskip
(2)(i)
We assume that $\SM$ is locally finitely generated over $\SA$.
Let $x\in X$.
Then there exist an open neighborhood $U$ of $x$,
a positive integer $n$
and epimorphism $\varphi\colon (\SA|_U)^{\oplus n}\twoheadrightarrow \M|_U$
of $\SA|_U$-modules.
Since the functor $\beta_U$ is exact
and there exist isomorphisms
$\beta_U(\SA|_U) \simeq (\beta_X\SA)|_U,\
\beta_U(\M|_U) \simeq (\beta_X\M)|_U$
we have an epimorphism $\beta_U(\varphi)\colon ((\beta_X\SA)|_U)^{\oplus n}\twoheadrightarrow (\beta_X\M)|_U$.
This implies that $\beta_XM$ is locally finitely generated over $\beta_X\SA$.
%%\medskip

(2)(ii)
We assume that $\M$ is locally finitely presented over $\SA$.
Let $x\in X$.
Then there exist an open neighborhood $U$ of $x$,
positive integers $m, n$
and an exact sequence 
$(\SA|_U)^{\oplus m}\to (\SA|_U)^{\oplus n}\to \M|_U\to 0$
in $\Mod(\SA|_U)$.
Since the functor $\beta_U$ is exact
and there exist isomorphisms
$\beta_U(\SA|_U) \simeq (\beta_X\SA)|_U,\
\beta_U(\M|_U) \simeq (\beta_X\M)|_U$,
we have an exact sequence in $\I((\beta_X\SA)|_U)$:
$$((\beta_X\SA)|_U)^{\oplus m}\to ((\beta_X\SA)|_U)^{\oplus n}\to (\beta_X\M)|_U\to 0.$$
This implies that $\beta_X\M$ is locally finitely presented over $\beta_X\SA$.
%%\medskip

(2)(iii).
We assume that $\SM$ is pseudo-coherent over $\SA$.
Let $U$ be an open subset of $X$,
$n$ a positive integer
and $\Phi\colon ((\beta_X\SA)|_U)^{\oplus n}\to (\beta_X\M)|_U$ a morphism of $(\beta_X\SA)|_U$-modules.
Since $\beta_X$ is fully faithful functor,
there exists a morphism $\varphi\colon \SA|_U\to \M|_U$ of $\SA|_U$-modules
such that $\Phi = \beta_U(\varphi)$.
By Lemma \ref{lem3} (1) of the case of the sheaves with ring actions which is well-known,
$\Ker\varphi$ is locally finitely generated over $A|_U$.
Therefore, since there exists an isomorphism $\Ker\Phi\simeq \beta_U(\Ker\varphi)$,
$\Ker\Phi$ is locally finitely generated over $(\beta_X\SA)|_U$ by (2)(i)
and hence $\beta_X\M$ is pseudo-coherent over $\beta_X\SA$ by Lemma \ref{lem3} (1).
%%\medskip

(2)(iv) follows from (2)(i) and (2)(iii).
\end{proof}

\begin{theorem}\label{main-thm}
Let $\SA$ be a sheaf of $\Bbbk_X$-algebras.
There exists an equivalence of categories:
 \[\xymatrix@C=60pt{\Modcoh(\SA)  \ar@<1.0ex>[r]^-{\beta_{X}} \ar@{}[r]|-\sim
 & \Icoh(\beta_X\SA) 
\ar@<1.0ex>[l]^-{\alpha_{X}}}.\]
\end{theorem}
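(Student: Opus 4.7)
The plan is to verify that $\beta_X$ and $\alpha_X$ are quasi-inverse on the coherent subcategories. The preceding proposition shows that they restrict to functors $\beta_X\colon\Modcoh(\SA)\to\Icoh(\beta_X\SA)$ and $\alpha_X\colon\Icoh(\beta_X\SA)\to\Modcoh(\SA)$, and from the introduction we have the adjunction $\beta_X\dashv\alpha_X$ with $\beta_X$ fully faithful. Consequently the unit $\SM\simto\alpha_X\beta_X\SM$ is already a natural isomorphism, and so the entire task reduces to proving that the counit $\varepsilon_M\colon\beta_X\alpha_X M\to M$ is an isomorphism for every $M\in\Icoh(\beta_X\SA)$.

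For this I would argue locally, exploiting the existence of finite presentations. By Lemma \ref{lem3} (2), a coherent ind-module is locally finitely presented, so around any $x\in X$ there exist an open neighborhood $U$ and an exact sequence
$$((\beta_X\SA)|_U)^{\oplus m}\longrightarrow((\beta_X\SA)|_U)^{\oplus n}\longrightarrow M|_U\longrightarrow 0.$$
Applying the exact functor $\beta_X\alpha_X$ (which commutes with $i_U^{-1}$ and with the finite coproducts recorded at the start of Section~2) and comparing with the above via the counit produces a commutative diagram with exact rows. The first two vertical arrows are the counit evaluated on objects of the form $\beta_X(-)$; by the triangle identity $\varepsilon_{\beta_X(-)}\circ\beta_X\eta_{-}=\id$ together with the fact that $\eta$ is an isomorphism, these vertical arrows are isomorphisms. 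A cokernel comparison then forces $\varepsilon_M|_U$ to be an isomorphism as well.

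To globalize, I would set $K := \Ker\varepsilon_M$ and $C := \Coker\varepsilon_M$. Exactness of $i_U^{-1}$ combined with the previous step yields $K|_U = 0 = C|_U$ on each $U$ of an open cover of $X$. Applying Corollary \ref{cor-indstalk} to $\id_K$ and $\id_C$, each of which then restricts to zero on every $U$ of the cover, gives $\id_K = 0 = \id_C$, hence $K = 0 = C$, so $\varepsilon_M$ is an isomorphism. The main obstacle is the local identification of the counit on free modules as an isomorphism, which is what ultimately ties everything to the fully faithfulness of $\beta_X$; once this is secured, the remainder is routine diagram chasing combined with the gluing provided by Corollary \ref{cor-indstalk}.
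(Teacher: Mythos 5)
Your proposal is correct and follows essentially the same route as the paper: reduce to the counit $\beta_X\alpha_X M\to M$ being an isomorphism (the unit being handled by full faithfulness of $\beta_X$), use Lemma \ref{lem3} (2) to get a local finite presentation, compare presentations via exactness of $\beta_X\alpha_X$ and the five lemma to get a local isomorphism, and globalize the vanishing of kernel and cokernel via Corollary \ref{cor-indstalk}. Your remark that the corollary is applied to $\id_K$ and $\id_C$ is a slightly more explicit rendering of the paper's final step, but not a different argument.
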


\begin{proof}
It is enough to show that
the natural morphism $\eta\colon (\beta_X\circ\alpha_X)(M) \to M$ is an isomorphism
for any coherent ind-$\beta_X\SA$-module $M$.

Let $M$ be a coherent ind-$\beta_X\SA$-module.
By Lemma \ref{lem3} (2),
$M$ is locally finitely presented over $\beta_X\SA$.
Then for any $x\in X$ there exist an open neighborhood $U$ of $x$,
two positive integers $m, n$ and an exact sequence in $\I((\beta_X\SA)|_U)$:
$$((\beta_X\mathscr{A})|_U)^{\oplus m}\longrightarrow((\beta_X\mathscr{A})|_U)^{\oplus n}
\longrightarrow M|_U\longrightarrow0.$$
Let us recall that there exists an isomorphism $\alpha_X\circ \beta_X \simeq \id_{\Mod(\Bbbk_X)}$
of functors, see \cite[Prop.\:3.3.27 (iii), also p50]{KS01} for the details.
Since there exists an isomorphism
$(\beta_U\circ\alpha_U)(M|_U)\simeq((\beta_X\circ\alpha_X)(M))|_U$,
we have a commutative diagram whose horizontal arrows are exact:
\[\xymatrix{((\beta_X\mathscr{A})|_U)^{\oplus m} \ar@{->}[r] \ar@{->}[d]^-\wr\ar@{->}[r]
& ((\beta_X\mathscr{A})|_U)^{\oplus n} \ar@{->}[r] \ar@{->}[d]^-{\wr} 
&(\beta_U\circ \alpha_U)(M|_U)\ar@{->}[d]^-{\eta|_U}\ar@{->}[r]& 0 \ar@{=}[d]\ar@{->}[r]& 0\ar@{=}[d]\\
((\beta_X\mathscr{A})|_U)^{\oplus m} \ar@{->}[r] & ((\beta_X\mathscr{A})|_U)^{\oplus n} \ar@{->}[r]
&M|_U\ar@{->}[r] & 0\ar@{->}[r] & 0.
}\]
By the five lemma,
$\eta|_U$ is an isomorphism
and hence $\Ker(\eta|_U)\simeq 0,\ \Image(\eta|_U)\simeq 0$.
This implies that
for any $x\in X$ there exists an open neighborhood $U$ of $x$
such that
$$(\Ker \eta)|_U \simeq 0,\hspace{7pt}
(\Image \eta)|_U \simeq 0.$$
Here we used the fact that inverse image functors are exact.
Hence by Corollary \ref{cor-indstalk}, we have $\Ker \eta\simeq 0,\ \Image\eta\simeq 0$.
Therefore, $\eta$ is an isomorphism.  
\end{proof}

\end{document}